\documentclass[11pt,a4paper]{amsart}
\usepackage[margin=1in]{geometry}
\usepackage{amsmath,amssymb,amsthm}
\usepackage{booktabs}
\usepackage[colorlinks,linkcolor=blue,citecolor=blue]{hyperref}
\usepackage{graphicx}
\usepackage{float}

\newtheorem{thm}{Theorem}[section]

\numberwithin{thm}{section}

\newtheorem{lemma}[thm]{Lemma}
\newtheorem{remark}[thm]{Remark}
\newtheorem{proposition}[thm]{Proposition}
\newtheorem{conjecture}[thm]{Conjecture}

\numberwithin{equation}{section}
\numberwithin{table}{section}

\DeclareMathOperator{\Area}{Area}
\DeclareMathOperator{\Real}{Re}
\DeclareMathOperator{\Imag}{Im}
\newcommand{\HH}{\mathbb{H}}
\newcommand{\RR}{\mathbb{R}}
\newcommand{\CC}{\mathbb{C}}

\newcommand{\DD}{\ensuremath{\mathrm{D}}}
\newcommand{\Fone}{F_1}
\newcommand{\Ftwo}{F_2}
\newcommand{\Sph}{\mathbb{S}}

\title[Smyth's free--boundary minimal disk in the regular tetrahedron]
{An exact hypergeometric Weierstrass representation\\
for Smyth's free--boundary minimal disk\\
in the regular tetrahedron}
\author{Steven Finch}
\address{MIT Sloan School of Management, Cambridge, MA 02139, USA}
\email{steven\_finch\_math@outlook.com}
\date{\today}
\subjclass[2020]{Primary 53A10; Secondary 33C05, 53C42}
\keywords{Schwarz D surface, Schwarz diamond surface, Schwarz P surface, Schwarz primitive surface, conjugate minimal surface, tetrahedron, hypergeometric, disphenoid}

\begin{document}
\maketitle

\begin{abstract}
Smyth (1984) proved that every tetrahedron $T\subset\RR^3$ contains three
embedded minimal disks meeting $\partial T$ orthogonally, obtained as
conjugate surfaces of Plateau solutions in a skew quadrilateral $Q$ whose 
edges are $c_i\nu_i$, where $\varphi_1,\varphi_2,\varphi_3,\varphi_4$ are 
faces of $T$, $c_i=\Area(\varphi_i)$ and $\nu_i$ are outward normal vectors. 
For the \emph{regular} tetrahedron we give the Weierstrass data explicitly. 
The Gauss map is a Schwarz triangle function for a circular--arc triangle 
with angles $(\pi/4,\ \pi/2,\ \pi\beta/2)$, where
$\beta=\arccos(-1/3)/\pi$; the hypergeometric parameters are linear in
$\beta$ and there is \emph{no accessory parameter and no free parameter}.
We record a closed form for $\kappa$ in $\Gamma$-values, a Wronskian
closed form for $g'$, the exact total curvature $2\pi(2\beta-1)$, the
symmetric--square structure of the data, and a rigidity statement: since
$\beta/2$ is irrational, the projective monodromy is dense and the data
\emph{cannot} be algebraic. Numerically the representation extends to
28 digits -- $\Area/\ell^{2}=0.2172341554075574483760009126\ldots$ -- 
confirming a 2013 Surface Evolver computation of Ken~Brakke.
\end{abstract}

\section{Numerical cross-check}

\begin{table}[h]
\centering
\begin{tabular}{lll}
\toprule
Source & $\Area/\ell^{2}$ & method\\
\midrule
K.~Brakke, priv. comm. (2013) & $0.217234155407559\ldots$ & Surface Evolver\\
Present representation & $0.217234155407557448\ldots$ & conjugate Plateau, hypergeom\\
\midrule
relative difference & $7.4\times10^{-15}$ &\\
\bottomrule
\end{tabular}
\end{table}

\noindent
\noindent
The two computations agree to $14$ digits; the absolute difference $1.6\times10^{-15}$
(relative $7.4\times10^{-15}$) is consistent with discretization error in Brakke's 
software package, whose convergence at the vertices of $\partial\Sigma^{*}$ is
only $O(h^{1-\beta})$, $1-\beta=0.3918\ldots$ (Remark~\ref{rem:vertex}).
Two entirely independent methods --- a discrete variational relaxation and
an exact analytic representation --- therefore confirm one another, and the
representation extends the constant from $14$ to $28$ digits.

\section{The representation}

\subsection{Constants}

Put
\begin{equation}
\beta=\frac{\arccos(-\tfrac13)}{\pi}
 =0.608173447969392729829\ldots,
\end{equation}
so that $\pi\beta$ is the angle $\approx 109.4712^\circ$ between adjacent outward face normals, and set
\begin{equation}\label{eq:abc}
a=\frac18-\frac{\beta}{4},\qquad
b=\frac58-\frac{\beta}{4}=a+\frac12,\qquad
c=\frac34 .
\end{equation}
Numerically $a=-0.0270433619923\ldots$ and $b=0.4729566380077\ldots .$ Note
the three exponent differences
\begin{equation}\label{eq:expdiff}
1-c=\frac14,\qquad b-a=\frac12,\qquad c-a-b=\frac{\beta}{2}
\end{equation}
at $t=0,\infty,1$ respectively. Let $_2F_1$ denote the ordinary hypergeometric function; write
\begin{equation}\label{eq:OrdHyp}
\Fone(t)={}_2F_1\!\left(a,\,b;\,\tfrac34;\,t\right),
\qquad
\Ftwo(t)={}_2F_1\!\left(a+\tfrac14,\,b+\tfrac14;\,\tfrac54;\,t\right).
\end{equation}

\subsection{Domain and uniformizing coordinate}

The fundamental domain is the $\tfrac18$-sector of the unit disk
\begin{equation}\label{eq:fundmn}
S=\Bigl\{w\in\CC:\ |w|<1,\ 0<\arg w<\tfrac{\pi}{4}\Bigr\},
\end{equation}
mapped to the upper half-plane by
\begin{equation}\label{eq:tofw}
t=\frac{4w^{4}}{(1+w^{4})^{2}}\ :\ S\ \xrightarrow{\ \cong\ }\ \HH,
\qquad
t^{1/4}=\frac{\sqrt2\,w}{(1+w^{4})^{1/2}} .
\end{equation}
The hypergeometric differential equation has singular points $t=0,1,\infty$; the three
sides of $S$ map to the three intervals of $\partial\mathbb{H}=\mathbb{R}\cup\{\infty\}$
that they separate:
\[
\{\arg w=0\}\mapsto(0,1),\qquad
\{|w|=1\}\mapsto(1,\infty),\qquad
\{\arg w=\tfrac\pi4\}\mapsto(-\infty,0),
\]
the three corners $w=0,1,e^{i\pi/4}$ of $S$ going to $t=0,1,\infty$
respectively.  Geometrically, on $\Sigma$ the two rays are a planar symmetry curve and a
straight line, while $|w|=1$ is an edge of $Q$; conjugation
interchanges these roles, so on $\Sigma^{*}$ the arc $|w|=1$ is the
\emph{free boundary} (\S\ref{subsec:two-srf}).

\subsection{Weierstrass data}\label{subsec:Wrsdat}

\begin{equation}\label{eq:data}
\boxed{\;
g=\kappa\,t^{1/4}\,\frac{\Ftwo(t)}{\Fone(t)},
\qquad
dh=\lambda\,t^{-1/2}\,(1-t)^{-\frac{1+\beta}{2}}\,\Fone(t)\,\Ftwo(t)\,dt
\;}
\end{equation}
with $\kappa$ given in \eqref{eq:kappa} below and $\lambda$ a nonzero complex constant.
Pulled back to $w$ through \eqref{eq:tofw}, and abbreviating
\begin{equation}\label{eq:rwabrv}
R(w)=(1-w^{4})^{-\beta}\,(1+w^{4})^{\beta},
\end{equation}
the three basic $1$-forms are
\begin{align}
\frac{dh}{g}&=4\sqrt2\,\frac{\lambda}{\kappa}\;\Fone^{2}\,
   \frac{R\,dw}{(1+w^{4})^{1/2}},\label{eq:w1}\\[2pt]
dh&=8\lambda\,w\;\Fone\Ftwo\,
   \frac{R\,dw}{1+w^{4}},\label{eq:w2}\\[2pt]
g\,dh&=8\sqrt2\,\lambda\kappa\;w^{2}\,\Ftwo^{2}\,
   \frac{R\,dw}{(1+w^{4})^{3/2}} .\label{eq:w3}
\end{align}
The surface is
\begin{equation}\label{eq:X}
X(w)=\Real\int^{w}\Phi,
\qquad
\Phi=\Bigl(\tfrac12\bigl(\tfrac{dh}{g}-g\,dh\bigr),\
           \tfrac{i}{2}\bigl(\tfrac{dh}{g}+g\,dh\bigr),\
           dh\Bigr).
\end{equation}
Because $\bigl(\tfrac{dh}{g}\bigr)(g\,dh)=dh^{2}$ holds
term-by-term in \eqref{eq:w1}--\eqref{eq:w3}, the conformality relation
$\Phi\cdot\Phi\equiv0$ is an algebraic identity, not a numerical
coincidence.

\subsection{The two surfaces}\label{subsec:two-srf}

The parameter $\lambda$ controls the associate family; only its phase
matters, its modulus being an overall homothety:
\begin{itemize}
\item $\lambda\in\RR$: Smyth's Plateau disk $\Sigma$ spanning the
equilateral skew quadrilateral $Q$ whose four edges are the
tetrahedral directions (side $\sqrt3$, interior angles 
$\arccos\tfrac13 \approx 70.53^\circ$, two equal perpendicular diagonals)
\item $\lambda\in i\RR$: the \textbf{free-boundary minimal disk
$\Sigma^{*}$ in a regular tetrahedron} $\tilde{T}$, meeting all four faces
orthogonally (see Figures 1 and 2).
\end{itemize}
The sign of $\lambda$ fixes an orientation of $\Sigma^{*}$; we take
$\lambda=-i$ throughout \S7--\S8, so that $Y>0$ in \eqref{eq:YJ}, and
all lengths and areas are understood as positive.

\clearpage

\begin{figure}[H]
	\vspace*{3cm}   
	\centering
	\includegraphics[width=\textwidth,height=\textheight,keepaspectratio]{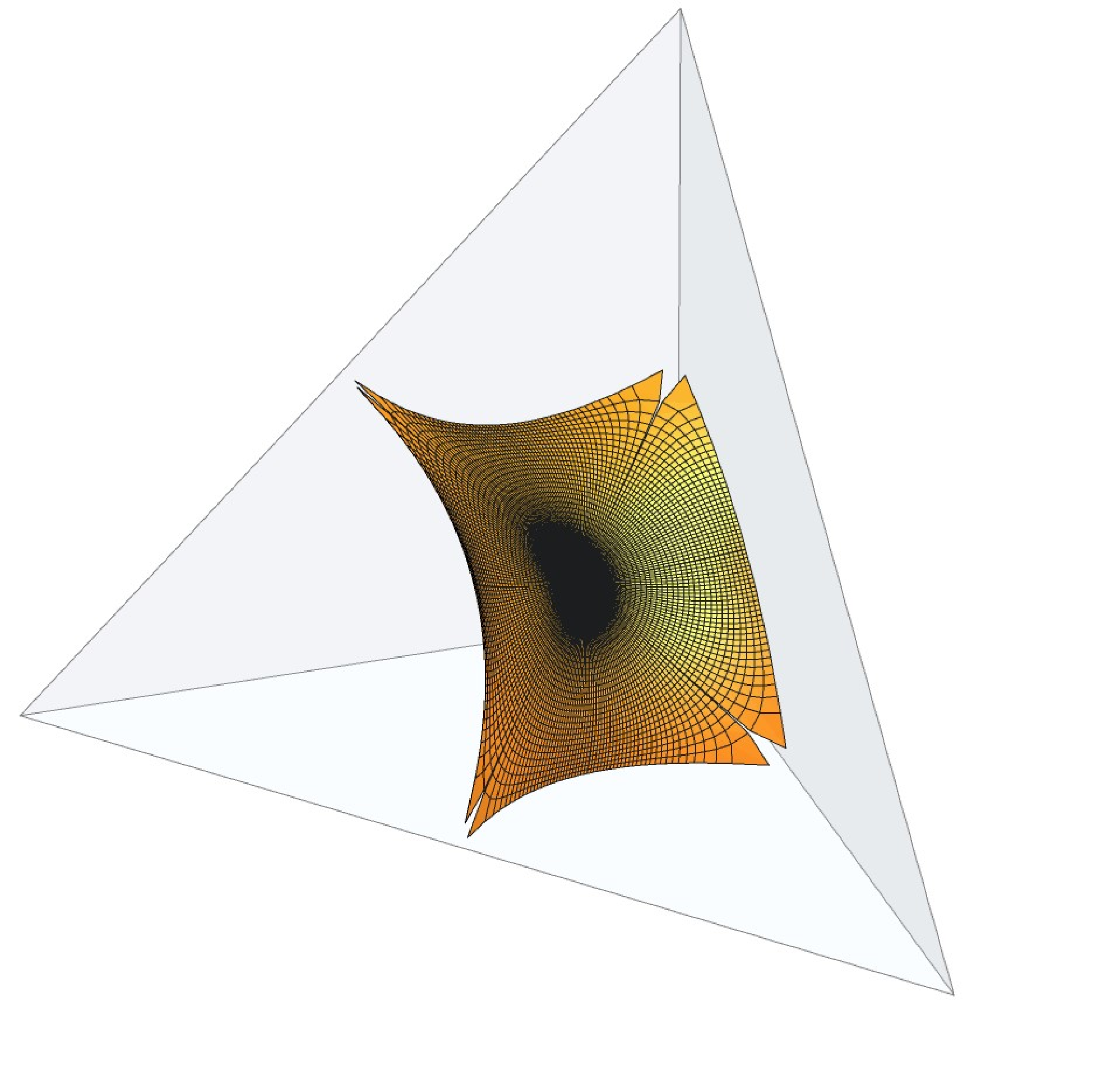}
	  \caption[Minimal disk in a tetrahedron]{One view of Smyth's free--boundary minimal disk 
		$\Sigma^{*}$ in $\tilde{T}$,\\ a regular tetrahedron\footnotemark . Its four corners 
		are midpoints of four of the six\\ edges of $\tilde{T}$; the two remaining edges, 
		an opposite pair, are untouched.}
\end{figure}
\footnotetext{$\Sigma^{*}$ has free boundary not in $T$ itself but 
	in a tetrahedron $\tilde{T}$ homothetic to $T$, whose size is fixed by $|\lambda|$; 
	rescaling $\Sigma^{*}$ by $\ell(T)/\ell(\tilde{T})$ places it in $T$. Since we report 
	only the scale-invariant ratio $\Area/\ell^{2}$, the distinction is immaterial
	numerically, but we keep the tilde as a reminder that the ambient
	tetrahedron is an output of the construction, not an input.}

\clearpage 

\subsection{The single constant, in $\Gamma$-values}

Gauss's theorem at $t=1$ evaluates both series; the two factors
$\Gamma(\beta/2)$ cancel, leaving
\begin{equation}\label{eq:kappa}
\kappa=(\sqrt2-1)\,\frac{\Fone(1)}{\Ftwo(1)}
=(\sqrt2-1)\,
\frac{\Gamma\!\left(\frac34\right)
      \Gamma\!\left(\frac78+\frac{\beta}{4}\right)
      \Gamma\!\left(\frac38+\frac{\beta}{4}\right)}
     {\Gamma\!\left(\frac54\right)
      \Gamma\!\left(\frac58+\frac{\beta}{4}\right)
      \Gamma\!\left(\frac18+\frac{\beta}{4}\right)}
=0.23973032106112410901\ldots .
\end{equation}
The normalization is fixed by $g=\sqrt2-1$ at $w=1$, i.e.,\ the corner of
the spherical square.

\subsection{Closed form for $g'$}

Since $y_{1}=t^{1/4}\Ftwo$ and $y_{2}=\Fone$ are the two Frobenius
solutions at $t=0$ of one and the same hypergeometric equation
(indices $0$ and $1-c=\tfrac14$), their Wronskian is
$W=(1-c)\,t^{-c}(1-t)^{c-a-b-1}$, and pulling back through
\eqref{eq:tofw} gives
\begin{equation}\label{eq:gprime}
\boxed{\;
g'(w)=\sqrt2\,\kappa\,
\frac{(1-w^{4})^{\beta-1}\,(1+w^{4})^{\frac12-\beta}}{\Fone(t)^{2}}
\;}
\end{equation}
with the check $g'(0)=\sqrt2\,\kappa$. No differentiation of
hypergeometric functions is ever required.

\section{Why there is no free parameter}

By \eqref{eq:expdiff}, $g$ is precisely the \emph{Schwarz triangle
function} of the circular--arc triangle with interior angles
\begin{equation}\label{eq:trifct}
\left(\frac{\pi}{4},\ \frac{\pi}{2},\ \frac{\pi\beta}{2}\right)
=\bigl(45^{\circ},\ 90^{\circ},\ \approx 54.7356^{\circ}\bigr),
\end{equation}
whose image is one eighth of a spherical square of the spherical
cuboctahedron (with corners 
$\pm(\sqrt2-1)$, $\pm i(\sqrt2-1)$ in
stereographic coordinates, all angles $\arccos(-\tfrac13)$).

A \emph{three}-cornered circular polygon has a rigid uniformizing map:
the Schwarzian equation
\[
\{f,w\}=\sum_{j}\Bigl[\frac{1-\theta_{j}^{2}}{2(w-w_{j})^{2}}
   +\frac{\beta_{j}}{w-w_{j}}\Bigr]
\]
has no free accessory parameter when $n=3$ \cite[Ch.~V]{Nehari}, whereas 
a four-cornered one does, and its determination requires isomonodromic
(Painlev\'e~VI) methods \cite{AnselmoNelsonCrowdy}. The $D_{2d}$
symmetry of $Q$ (order $8$) is exactly what reduces the four
corners of the Gauss image to three. \textbf{This is the structural
reason the surface admits a closed form at all.}

\begin{proposition}[Exact total curvature]\label{prop:extocrv}
\begin{equation}
\left|\int_{\Sigma^{*}}\!K\,dA\right|
=8\times\text{(spherical excess)}
=8\pi\!\left(\tfrac{\beta}{2}-\tfrac14\right)
=2\pi(2\beta-1)
=4\arccos\!\left(-\tfrac13\right)-2\pi
=1.3593475\ldots
\end{equation}
Equivalently $\deg g=1$ onto the spherical cuboctahedral square; the
Gauss--Bonnet balance is
$\int K=2\pi-4\bigl(\pi-\arccos\tfrac13\bigr)$ for a disk with four
free-boundary arcs.
\end{proposition}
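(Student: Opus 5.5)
The plan is to compute $\int_{\Sigma^*}K\,dA$ as the signed area of the Gauss image, i.e.\ $-\int K\,dA=\int g^*(dA_{\Sph^2})$, and to use the fact that $\Sigma^*$ and $\Sigma$ are conjugate. Conjugate surfaces have the same Gauss map $g$ and the same metric, so the total curvature can be read off from the Weierstrass data \eqref{eq:data} with any phase of $\lambda$. The disk is parametrized by the full unit disk $|w|<1$. By the $D_{2d}$ symmetry it is the union of eight copies of the sector $S$ of \eqref{eq:fundmn}, obtained by reflections across the rays $\arg w=k\pi/4$. On $S$, by the discussion in \S3, $g$ is the Schwarz triangle function: it maps $S$ conformally onto the circular-arc triangle with angles \eqref{eq:trifct}. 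So I will establish three things in order: (i) $g|_S$ is univalent onto that triangle $\Delta$; (ii) the total Gauss image is $8\Delta$ counted once, that is, $\deg g=1$ onto the spherical square; (iii) $\Area(\Delta)$ is its spherical excess.

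For (i), the exponent differences \eqref{eq:expdiff} at $t=0,1,\infty$ are $\tfrac14,\tfrac{\beta}{2},\tfrac12$, all in $(0,1)$. Their sum exceeds $1$, so the Schwarz map $t\mapsto y_1/y_2$ takes $\HH$ univalently onto a spherical triangle with angles $\pi/4,\pi\beta/2,\pi/2$. This is the classical Schwarz result; \eqref{eq:gprime} shows $g'\neq0$ in the interior, which rules out branch points. Composing with the biholomorphism \eqref{eq:tofw} $S\to\HH$ gives (i). For (ii), I use the reflection principle along the sides of $S$. Across the rays $\arg w=0$ and $\arg w=\pi/4$, the symmetry $g(\bar w)=\overline{g(w)}$ together with the $w\mapsto iw$ equivariance, $g(iw)=ig(w)$ (visible from $g=\sqrt2\kappa w\,(\dots)(w^4)$), shows that the eight images tile the region bounded by the arc images of $|w|=1$. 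That region is the square with corners $\pm(\sqrt2-1),\pm i(\sqrt2-1)$, and each point of it is hit once. Hence $\int_{\Sigma^*}|K|\,dA=8\Area(\Delta)$, and $K\le0$ removes the absolute value issue.

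For (iii), Gauss--Bonnet on the unit sphere gives $\Area(\Delta)=\pi/4+\pi/2+\pi\beta/2-\pi=\pi(\tfrac{\beta}{2}-\tfrac14)$. Multiplying by $8$ yields $2\pi(2\beta-1)=4\arccos(-\tfrac13)-2\pi$. As a consistency check I will redo the computation intrinsically. Apply Gauss--Bonnet to $\Sigma^*$: the free-boundary arcs are lines of curvature lying in planes met orthogonally, so their geodesic curvature integrates to the turning of the normal. Equivalently, on the Gauss image the square has four interior angles $\arccos(-\tfrac13)$, so its area is $4\arccos(-\tfrac13)-2\pi$. This agrees and gives the stated balance $2\pi-4(\pi-\arccos\tfrac13)$ with the sign convention $\int K<0$.

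The main obstacle is step (ii): showing that the eight reflected triangles fit together with no overlap and no extra sheet. In other words, the Gauss map must be globally injective, not merely locally. I would first verify the vertex angles at the images of $w=0$ and $w=1$. At $w=0$, the eight angles $\pi/4$ sum to $2\pi$, so $g$ is unbranched there, consistent with $g'(0)=\sqrt2\kappa\neq0$. At the corner $w=1$, two angles $\pi\beta/2$ meet to give the square's corner angle $\pi\beta=\arccos(-\tfrac13)$. Then I would invoke the argument principle on $\partial\{|w|<1\}$: the boundary image traverses the square's boundary exactly once, so the degree is $1$.
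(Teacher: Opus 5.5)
Your argument is correct, and its backbone is the paper's: $-\int K\,dA$ is the spherical area of the Gauss image. That image is the cuboctahedral square, tiled by eight copies of the Schwarz triangle with angles $(\pi/4,\pi/2,\pi\beta/2)$, which gives $8\pi(\beta/2-1/4)$.

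The genuine difference is how you obtain $\deg g=1$. The paper never argues this analytically; \S A.3 and Part B.7 say that the two-dimensional quadrature returning $1.3593$ rather than $2.7187$ is what establishes it. You derive it instead. Univalence of the Schwarz map for exponent differences $\tfrac14,\tfrac{\beta}{2},\tfrac12\in(0,1)$, composed with the conformal bijection $t\colon S\to\HH$, makes $g|_S$ injective onto $\Delta$. Writing $g(w)=w\,G(w^4)$ with $G$ real on $(-1,1)$ gives $g(iw)=ig(w)$ and $g(\bar w)=\overline{g(w)}$. So the eight sectors go onto the eight triangles of the dihedral subdivision of the square, whose interiors are disjoint. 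This turns the paper's numerical evidence into a proof, and your closing argument-principle step becomes unnecessary.

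What the quadrature buys in return is an independent check of the normalization, which your route takes on trust. The angles alone do not make $\Delta$ a geodesic triangle: $g\mapsto sg$ ($s>0$) preserves all three angles but carries the great circle $|g+1+i|=\sqrt3$ to a small circle unless $s=1$. Your step ``area $=$ angle excess'' therefore uses the fact that $\kappa$ puts the corner at $\sqrt2-1$, and you should say so explicitly.

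One correction to your intrinsic consistency check: the free-boundary arcs are geodesics of $\Sigma^*$, not merely lines of curvature. Along such an arc the conormal is the constant face normal $n$ and the arc lies in the face, so $k_g=\pm\langle\gamma'',n\rangle\equiv0$ and there is no boundary integral at all. The balance $\int K=2\pi-4(\pi-\arccos\tfrac13)$ comes entirely from the four corners. There the interior angle of $\Sigma^*$ equals the dihedral angle $\arccos\tfrac13$, because the surface meets both faces orthogonally.
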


\section{Symmetric--square structure}

Let
\[
y_{1}=\sqrt{\kappa}\;t^{1/4}\Ftwo(t),
\qquad
y_{2}=\frac{1}{\sqrt{\kappa}}\,\Fone(t)
\]
be the Frobenius basis at $t=0$ of \S\ref{subsec:Wrsdat}, rescaled so that
$g=y_{1}/y_{2}$ exactly, and put
\begin{equation}
dq=\lambda\,t^{-3/4}(1-t)^{-\frac{1+\beta}{2}}\,dt .
\end{equation}
Then \eqref{eq:data} reads
\begin{equation}
\frac{dh}{g}=y_{2}^{2}\,dq,\qquad
dh=y_{1}y_{2}\,dq,\qquad
g\,dh=y_{1}^{2}\,dq,\qquad
g=\frac{y_{1}}{y_{2}}
\end{equation}
so that
\begin{equation}\label{eq:symsq}
\Phi=\Bigl(\tfrac12(y_{2}^{2}-y_{1}^{2}),\
           \tfrac{i}{2}(y_{2}^{2}+y_{1}^{2}),\
           y_{1}y_{2}\Bigr)\,dq .
\end{equation}
Thus the entire surface is the \emph{symmetric square} of a single
second-order Fuchsian equation, twisted by one algebraic $1$-form;
$\Phi\cdot\Phi=0$ is then the identity
$(y_{2}^{2}-y_{1}^{2})^{2}-(y_{2}^{2}+y_{1}^{2})^{2}+4y_{1}^{2}y_{2}^{2}=0$.
This is the natural object on which to run a monodromy or isomonodromy
analysis.

\section{A rigidity theorem: no algebraic data exist}

\begin{thm}[Non-algebraicity]\label{thm:nonalg} 
The Gauss map of Smyth's minimal disk in the regular tetrahedron is not
an algebraic function. In particular there is \emph{no} representation
with $g$ rational and $dh=dw/\sqrt{P(w)}$ for any polynomial $P$.
\end{thm}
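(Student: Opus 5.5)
The plan is to argue through the projective monodromy of the hypergeometric equation whose Frobenius solutions are $y_1=t^{1/4}\Ftwo$ and $y_2=\Fone$. By \eqref{eq:data}, $g=\kappa\,t^{1/4}\Ftwo/\Fone$ is a ratio of two independent solutions of that equation. So $g$, as a multivalued function of $t$, is algebraic over $\CC(t)$ if and only if its analytic continuations take finitely many branches. That in turn holds if and only if the projective monodromy group $\Gamma\subset PGL_2(\CC)$ is finite. Since $t$ is a rational function of $w$ by \eqref{eq:tofw}, algebraicity in $w$ and in $t$ are equivalent, so it suffices to show that $\Gamma$ is infinite.

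The key steps, in order, are as follows.
\begin{enumerate}
\item Compute the local projective monodromies from the exponent differences \eqref{eq:expdiff}. A loop around $t=0$ acts on $g$ by multiplication by $e^{2\pi i/4}$, which has order $4$. Around $\infty$ the exponent difference $1/2$ gives an element of order $2$.
\item Around $t=1$ the exponent difference is $\beta/2$, and since it is non-integral the local monodromy is diagonalizable. In a local Frobenius basis at $t=1$, $g$ is a Möbius image of $(1-t)^{\beta/2}\cdot(\text{holomorphic unit})$. The local projective monodromy is therefore conjugate to $z\mapsto e^{\pi i\beta}z$, a rotation by angle $\pi\beta$.
\item Show that $\beta$ is irrational. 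This is the one arithmetic input. By Niven's theorem (or directly: if $\cos(\pi r)$ is rational with $r$ rational, then $\cos\pi r\in\{0,\pm\frac12,\pm1\}$), the value $\cos(\pi\beta)=-\tfrac13$ forces $\beta\notin\mathbb{Q}$.

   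For a self-contained argument: $2\cos\theta=-\tfrac23$ is not an algebraic integer. But if $\theta=\pi\beta$ were a rational multiple of $\pi$, then $2\cos\theta=\zeta+\zeta^{-1}$ for a root of unity $\zeta$, which is an algebraic integer. This is a contradiction, so $\beta/2$ is irrational too.
\item Conclude that the rotation $z\mapsto e^{\pi i\beta}z$ has infinite order, so $\Gamma$ is infinite; its closure contains a full circle, which is the density statement in the abstract. Hence $g$ has infinitely many branches and is transcendental.
\end{enumerate}

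An alternative to the last step is to use the Schwarz list. The triangle $(1/4,1/2,\beta/2)$ has an irrational angle and so cannot appear among the finite (dihedral, tetrahedral, octahedral, icosahedral) cases. The direct infinite-order argument avoids citing that classification.

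For the \emph{in particular} clause, suppose a representation existed with $g$ rational on some parameter domain, and with a conformal reparametrization of the disk identifying it with the present data. Then, up to a Möbius change of coordinates (a disk automorphism, since the Weierstrass data of a given surface are unique up to conformal reparametrization and rotation), $g$ would be a rational function of $w$. A rotation of space acts on $g$ by a Möbius map, so $g$ would still be algebraic, contradicting the above. The differential $dh$ plays no role in the argument.

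The step requiring the most care is step~2, which ties the analytic-continuation monodromy of $g$ to the rotation by $\pi\beta$. One must check that the non-integrality of the exponent difference rules out logarithmic terms, so that the local monodromy is genuinely semisimple; here $\beta/2\notin\mathbb{Z}$ makes this immediate. One must also be precise that \emph{algebraic} refers to continuation of $g$ beyond the fundamental domain $S$ across the free-boundary arc and the two rays. The continuation through the Schwarz reflection principle across the sides of $S$ is exactly the monodromy computed in steps~1 and~2.
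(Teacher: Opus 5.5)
Your argument for the first sentence is the paper's own. You note that $g$ is a ratio of two solutions of the hypergeometric equation, that algebraicity is equivalent to finiteness of the projective monodromy, and that the local monodromy at $t=1$ is a rotation by $\pi\beta$, of infinite order because $\beta\notin\mathbb{Q}$. Your algebraic-integer argument for $\beta\notin\mathbb{Q}$ supplies what the paper only asserts. One minor overstatement: a closure containing a circle is not yet density in $PSU(2)$. For that you also need that the quarter-turn $g\mapsto ig$ does not preserve the axis through $g=\sqrt2-1$, so the closure is not inside an $O(2)$. The theorem does not need this.

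The genuine gap is the \emph{in particular} clause. A conformal reparametrization from $|w|<1$ to the parameter domain of a hypothetical representation is an arbitrary biholomorphism, not a disk automorphism. The representations the clause targets, such as the $tD/rPD$ ansatz $g=w$, $dh=dw/\sqrt{w^{8}+2\nu w^{4}+1}$ of the Remark, live on the Gauss image itself, not on a round disk. In fact, with $g$ alone the claim is false: since $g$ is injective ($\deg g=1$), $z=g$ is a legitimate conformal coordinate on $\Sigma^{*}$, and in it $g(z)=z$ is rational. So transcendence of $g$ in the particular coordinate $w$ cannot by itself exclude a rational $g$ in another coordinate. Your sentence ``the differential $dh$ plays no role'' is exactly where the argument breaks.

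The fix is to use the parametrization-free function $dh/dg$. From \eqref{eq:data} and the Wronskian, $dh/dg=(4\lambda/\kappa)\,t^{1/4}(1-t)^{\frac12-\beta}\Fone^{3}\Ftwo$. Since $dg/dt\sim C(1-t)^{\beta/2-1}$, you get $g-g_{0}\sim C'(1-t)^{\beta/2}$ with $g_{0}=\sqrt2-1$ and $C'\neq0$. Hence $dh/dg\sim C''(g-g_{0})^{1/\beta-2}$, with irrational exponent. Now suppose $g$ were rational in some coordinate $z$ with $dh=dz/\sqrt{P(z)}$. Then $dh/dg=1/(g'(z)\sqrt{P(z)})$ would be algebraic over $\CC(g)$. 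A rotation of space does not spoil this: it only applies a M\"obius map to $g$ and multiplies $dh/dg$ by a rational function of $g$. The Puiseux exponents at $g_{0}$ would then be rational, a contradiction. As a check, $\beta=\tfrac23$ gives exponent $-\tfrac12$, which is precisely the simple-root behaviour of the $tD$ ansatz and matches the paper's Attempt~1. The paper's ``Reason'' does not address this clause either, so what is missing here is a step the paper also omits, not a departure from a written argument.
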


\begin{proof}[Reason]
By Schwarz's classification a triangle function is algebraic if and only
if its projective monodromy group is finite. Here the local exponent
difference at $t=1$ is
$\beta/2=\arccos(-\tfrac13)/(2\pi)$, which is irrational; hence the local
monodromy about $t=1$ has infinite order and the projective monodromy is
a dense, non-discrete subgroup of $PSU(2)$.
\end{proof}

\begin{remark}
Equivalently: the dihedral angle $\arccos\tfrac13$ of the regular
tetrahedron is not a rational multiple of $\pi$, so Schwarz reflection in
the four faces generates a non-discrete group and the extended surface is
\emph{not} triply periodic. This rules out at the outset any
$tD/rPD$-type ansatz such as $g=w$, $dh=dw/\sqrt{w^{8}+2\nu w^{4}+1}$
for a real parameter $\nu$
\cite{Karcher,FogdenHyde,ChenWeber}, and explains structurally why no 
formula has ever appeared in the classical literature.
\end{remark}

\section{A one-parameter family; the flat square as a limit}

Nothing in \S2 used the particular value of $\beta$. Letting $\beta$
vary gives Smyth disks in \emph{tetragonal disphenoids} (tetrahedra with
four congruent isosceles faces), for which $c_{1}=c_{2}=c_{3}=c_{4}$
automatically, so Smyth's construction applies verbatim.

\begin{table}[h]
\centering
\begin{tabular}{llp{6.6cm}}
\toprule
$\beta$ & triangle angles & character\\
\midrule
$\to\tfrac12$ & $(\tfrac\pi4,\tfrac\pi2,\tfrac\pi4)$, Euclidean $(2,4,4)$ &
$\int K\to0$: degenerates to the \emph{flat} mid-edge square,
$\Area/\ell^{2}=\tfrac14$\\
$\tfrac23$ & $(\tfrac\pi4,\tfrac\pi2,\tfrac\pi3)$, octahedral $(2,3,4)$ &
finite monodromy $\Rightarrow$ $g$ algebraic; classical Schwarz-type
(\emph{conjectural} identification, unverified)\\
$\dfrac{\arccos(-1/3)}{\pi}$ & $(\tfrac\pi4,\tfrac\pi2,\ \approx 54.7356^{\circ})$ &
\textbf{regular tetrahedron}: dense monodromy, transcendental\\
\bottomrule
\end{tabular}
\end{table}

For a tetragonal disphenoid the six edges fall into two $D_{2d}$ orbits:
four congruent edges, along which the dihedral angle is $\delta_{1}$, and
two opposite mutually perpendicular edges, along which it is $\delta_{2}$.
Writing the vertices as $(\pm p,0,q)$, $(0,\pm p,-q)$ and $\mu=2q/p$,
the outward face normals are $(0,\pm\mu,1)$, $(\pm\mu,0,-1)$
divided by $\sqrt{1+\mu^{2}}$, whence
\[
\cos\pi\beta=-\frac{1}{1+\mu^{2}},\qquad
\delta_{1}=\pi(1-\beta),\qquad
\cos\delta_{2}=1+2\cos\pi\beta=\frac{\mu^{2}-1}{\mu^{2}+1}.
\]
Thus $\pi\beta$ is the angle between adjacent outward face normals, and
\[
\beta=1-\frac{\delta_{1}}{\pi}
\]
is the single shape parameter of the family. It decreases from
$\beta=\tfrac23$ at $\mu=1$ ($\delta_{1}=\tfrac\pi3$,
$\delta_{2}=\tfrac\pi2$; edges $4$ and $2\sqrt3$) through the regular
tetrahedron $\mu=\sqrt2$, $\delta_{1}=\delta_{2}=\arccos\tfrac13$, to
$\beta\to\tfrac12$ as $\mu\to\infty$, where the disphenoid elongates,
the four faces become perpendicular to the mid-edge plane, and
$\int K\to0$. (Letting $\mu\to0$ would continue the family to
$\beta\to1$; we do not consider $\beta>\tfrac23$.) 

In the following, we speak of a surface-valued map being ``real-analytic'' 
in the sense that the Weierstrass data
\eqref{eq:data} --- equivalently the map
$(\beta,w)\mapsto\Phi_{\beta}(w)$ on $(\tfrac12,\tfrac23]\times S$ ---
depends in a jointly real-analytic manner on $(\beta,w)$, and hence so do
$X^{*}_{\beta}$ and all derived quantities.

Smyth's three disks arise from the three ways of closing the four vectors
$c_{i}\nu_{i}$ into a skew quadrilateral, indexed by which pair of normals
is diagonal rather than adjacent. The symmetry group of $T$ permutes
these three: for the regular tetrahedron it acts transitively and the
three disks are congruent, whereas for a general disphenoid $D_{2d}$ fixes
one ordering and interchanges the other two, leaving two congruence
classes. Only the distinguished ordering has all four angles of its
spherical quadrilateral equal to $\pi\beta$; the other two have angles
alternating between $\pi\beta$ and $\pi-\delta_{2}$, and stabilizer of
order $4$. In the regular case
$\cos\pi\beta=(1-\mu^{2})/(1+\mu^{2})=-\tfrac13$ and all three coincide.

\begin{conjecture}\label{conj:family}
	For each $\beta$ let $\Sigma^{*}_{\beta}$ denote the distinguished
	Smyth disk, the one invariant under the full symmetry group $D_{2d}$ of
	the disphenoid.\footnote{The other two disks are almost certainly harder, 
	not merely different: their Gauss quadrilateral has unequal angles and only 
	order-4 symmetry, so the fundamental domain is a circular-arc \emph{quadrilateral}, 
	not a triangle -- and then the accessory parameter of §3 returns.}
	The map $[\tfrac12,\tfrac23]\ni\beta\mapsto\Sigma^{*}_{\beta}$ 
	is a real-analytic $1$-parameter deformation of free-boundary minimal
	disks in tetragonal disphenoids, with natural parameter the total
	curvature $\int K=-2\pi(2\beta-1)$ running from $0$ to $-2\pi/3$. Its two
	endpoints are the flat mid-edge square ($\beta=\tfrac12$) and the
	fundamental piece of Schwarz's P--surface ($\beta=\tfrac23$); the
	regular-tetrahedron disk is the interior member
	$\beta=\arccos(-\tfrac13)/\pi$. Equivalently, in the conjugate picture,
	$\Sigma_{2/3}$ is the fundamental piece of Schwarz's
	D--surface.\footnote{This is not to be confused with a separate occurrence 
	of D: cf.\ \cite{GKD}, whose Fig.~1a (p. 543) shows 
	the fundamental domain of D, characterized later  (p. 545) as 
	``inscribed in a tetragonal disphenoid, an isohedral tetrahedron 
	with isosceles triangle faces''.  We have not studied this occurrence; 
	its $\beta$ value is not yet known.}
\end{conjecture}

\section{Scale-free expressions for the area}

Along $\partial\Sigma^{*}$ the unit normal $N$ lies \emph{in} the face,
hence the conormal is parallel to the face normal $n$; with
$X\cdot n=-r$ constant on the face this gives, from
$\Area=\tfrac12\oint X\cdot\partial_{r}X\,d\theta$,
\begin{equation}\label{eq:arearL}
\Area(\Sigma^{*})=\tfrac12\,r\,L,
\qquad
\frac{\Area}{\ell^{2}}=\frac{L}{48\,r}
\end{equation}
where $L=|\partial\Sigma^{*}|$, $r$ is the inradius of $\tilde{T}$ and
$\ell=2\sqrt6\,r$ its edge.

Identity \eqref{eq:arearL} is load-bearing in what follows: it is what
permits the area to be obtained with \emph{no} surface integral and,
after Proposition~\ref{prop:twoint}, with no integral along the free
boundary either. Its two inputs --- planarity of each boundary arc, and
orthogonality along it --- are independent of \eqref{eq:arearL} and are
verified numerically in \S8 to $11$ and $12$ digits respectively; the
resulting area agrees with a direct Dirichlet-energy quadrature to $12$
digits.

\begin{proposition}[Corners]\label{prop:corners}
	The four corners of $\partial\Sigma^{*}$ are the midpoints of four of the
	six edges of $\tilde{T}$. Indeed $\sum_i n_{i}=0$ forces
	$\sum_i X\cdot n_{i}=0$; two offsets equal $-r$ (the corner lies on an
	edge), the $D_{2d}$ mirror fixing that edge makes the other two equal,
	so the offsets are $(-r,-r,r,r)$, whence
	$|X_{\mathrm{corner}}|=r\sqrt3=\ell/(2\sqrt2)$.
\end{proposition}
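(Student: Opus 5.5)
The plan is to get all four corner positions from a single linear relation among the face normals, with the $D_{2d}$ symmetry of $\Sigma^{*}$ fixing which faces each corner touches. Place the incenter of $\tilde{T}$ at the origin and write its faces as $\{X\cdot n_i=-r\}$, $i=1,\dots,4$, with unit outward normals $n_i$; note the sign convention here. Because $\tilde{T}$ is regular, the closing condition for Smyth's quadrilateral $Q$ (all $c_i$ equal) reads $\sum_i n_i=0$. Consequently $\sum_i X\cdot n_i=0$ for every point $X\in\RR^3$. This identity is the only analytic input; everything else comes from locating the corners combinatorially.

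\textbf{Step 1: each corner lies on an edge.} By \S\ref{subsec:two-srf} the arcs $|w|=1$ are the free-boundary arcs of $\partial\Sigma^{*}$, and consecutive arcs lie in distinct faces $\varphi_i,\varphi_j$. The two faces are distinct because consecutive edges of $Q$ are distinct normals $c_i\nu_i,c_j\nu_j$. A corner $p$ therefore lies in $\varphi_i\cap\varphi_j$, which is an edge of $\tilde{T}$, so $p\cdot n_i=p\cdot n_j=-r$.

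\textbf{Step 2: the other two offsets are equal.} The remaining offsets $p\cdot n_k$ and $p\cdot n_l$ must also be determined. I would use the $D_{2d}$ symmetry, which is the step to be checked carefully. Under conjugation, the symmetry of $\Sigma$ that fixes the corner of $Q$ between edges $\nu_i,\nu_j$ becomes a mirror symmetry of $\Sigma^{*}$. The mirror plane contains the edge $\varphi_i\cap\varphi_j$ and interchanges the other two faces $\varphi_k,\varphi_l$; for the regular tetrahedron this is the plane through that edge and the midpoint of the opposite edge. The corner $p$ is fixed by this reflection, so $p\cdot n_k=p\cdot n_l$. Combined with $\sum_i p\cdot n_i=0$, this gives offsets $(-r,-r,r,r)$.

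\textbf{Step 3: identify the point.} Use $n_i\cdot n_j=-1/3$ for $i\neq j$, together with $\sum n_i=0$. Write $p=\sum_m \alpha_m n_m$, with the coefficients normalized by $\sum\alpha_m=0$. Solving $p\cdot n_m=\tfrac43\alpha_m-\tfrac13\sum\alpha=\tfrac43\alpha_m$ gives $\alpha=\tfrac{3r}{4}(-1,-1,1,1)$. Hence
$$
|p|^2=p\cdot\sum_m \alpha_m n_m=\tfrac{3r}{4}(r+r+r+r)=3r^2,
$$
so $|p|=r\sqrt3=\ell/(2\sqrt2)$ using $\ell=2\sqrt6\,r$. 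This is exactly the distance from the center to an edge midpoint of a regular tetrahedron. The point on the edge $\varphi_i\cap\varphi_j$ at that distance from the center is unique, namely the foot of the perpendicular, which is the midpoint.

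\textbf{Step 4: count the edges.} The four corners correspond to the four adjacent pairs of normals in $Q$. These are four of the six edges; the two diagonal pairs give the untouched opposite edges.

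The main obstacle is Step 2: it requires justifying that the conjugate of the $D_{2d}$ symmetry of $Q$ yields a reflection of $\Sigma^{*}$ fixing each corner and swapping the two non-incident faces. I would argue this from the symmetric Weierstrass data on the sector $S$, namely the reflections across $\arg w=0$ and $\arg w=\pi/4$.
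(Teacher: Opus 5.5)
Your outline is the paper's own argument: $\sum_i n_i=0$, two offsets equal to $-r$ because the corner is on an edge, a symmetry fixing the corner that equalizes the other two offsets, then $|p|=r\sqrt3$. Steps 1, 3 and 4 are fine. Step 2, however, fails as written, for two reasons.

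First, the plane through the edge $\varphi_i\cap\varphi_j$ and the midpoint of the opposite edge interchanges the two faces that \emph{contain} the edge, $\varphi_i\leftrightarrow\varphi_j$. It maps each of $\varphi_k,\varphi_l$ to itself. So even if it fixed $p$, it would give only $p\cdot n_i=p\cdot n_j$, which you already have. The reflection that swaps $\varphi_k$ and $\varphi_l$ is the perpendicular bisector plane of the edge.

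Second, your conjugation claim runs backwards. On $\Sigma$ the symmetry fixing the vertex $X_\Sigma(1)$ of $Q$ is indeed a mirror: the ray $\arg w=0$ is a planar symmetry curve in $x_2=0$. But conjugation turns planar symmetry curves into straight lines (Lemma~\ref{B:dict}). So on $\Sigma^{*}$ the same parameter involution $w\mapsto\bar w$ acts as the half-turn $(x_1,x_2,x_3)\mapsto(-x_1,x_2,-x_3)$. Its axis is the line from the incenter through the corner $X^{*}(1)=(0,\pm Y,0)$. The stabilizer of a corner in the $D_{2d}$ of $\Sigma^{*}$ contains no reflection. The only mirrors of $\Sigma^{*}$ are the planes $x_1=\pm x_2$, and they move every corner.

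The repair is to use that half-turn. It fixes $p$ and the incenter, and it preserves $\tilde{T}$. The involution $w\mapsto\bar w$ swaps the two free-boundary arcs adjacent to $w=1$ and also the two arcs adjacent to $w=-1$. Hence the half-turn interchanges $\varphi_i\leftrightarrow\varphi_j$ \emph{and} $\varphi_k\leftrightarrow\varphi_l$; with the normals \eqref{B:normals} it sends $\nu_1\leftrightarrow\nu_3$ and $\nu_2\leftrightarrow\nu_4$. This gives $p\cdot n_k=p\cdot n_l$, and your Steps 3--4 then go through unchanged. The half-turn is exactly the composition of the two perpendicular mirrors you conflated: the plane containing the edge and the plane bisecting it.

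The paper's phrase ``the $D_{2d}$ mirror fixing that edge'' should be read the same way. The relevant element is a mirror of $\Sigma$ but a half-turn of $\Sigma^{*}$, which maps the edge to itself and fixes only its midpoint.
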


\noindent
Consequently $\Sigma^{*}$ and the flat mid-edge square share the same
four corner points: the comparison in \S8 is exactly apples‑to‑apples.

\medskip
All of $r$, $\ell$, $L$ and $\Area$ are now expressible by two real
integrals along the two \emph{rays} of $S$, with the free-boundary arc
never entered. Let
\begin{equation}\label{eq:psidef}
\Psi(w)\,dw=\frac12\Bigl(\frac{dh}{g}+g\,dh\Bigr)\Big|_{\lambda=1},
\qquad\text{so that}\qquad \Phi_{2}=i\,\Psi\,dw ;
\end{equation}
explicitly $\Psi=4\sqrt2\,\Fone^{2}R/\bigl(\kappa\sqrt{1+w^{4}}\bigr)
+8\sqrt2\,\kappa\,w^{2}\Ftwo^{2}R/(1+w^{4})^{3/2}$, halved.

\begin{proposition}[Two-integral reduction]\label{prop:twoint}
	Put
	\begin{equation}\label{eq:YJ}
	Y=\int_{0}^{1}\Psi(\rho)\,d\rho,
	\qquad
	J=\int_{0}^{1}\Real\Bigl[e^{3i\pi/4}\,\Psi\bigl(\rho e^{i\pi/4}\bigr)\Bigr]d\rho .
	\end{equation}
	Both integrands are real-valued and real-analytic on $(0,1)$ with
	integrable endpoint behavior, and
	\begin{equation}\label{eq:twoint}
	\boxed{\;
		r=\frac{Y}{\sqrt3},\qquad
		\ell=2\sqrt2\,Y,\qquad
		L=8\sqrt3\,|J|,\qquad
		\Area(\Sigma^{*})=4\,Y\,|J|,\qquad
		\frac{\Area}{\ell^{2}}=\frac{|J|}{2\,Y}\,.}
	\end{equation}
\end{proposition}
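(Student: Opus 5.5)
The plan is to read all four quantities off the $\tfrac18$-piece of $\Sigma^{*}$ over the sector $S$. Its boundary consists of the real ray, the diagonal ray $\arg w=\pi/4$, and the quarter-arc $|w|=1$, which is the free boundary. Everything is then expressed through $\Phi_2$ and the conjugate (flux) form.

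\textbf{Step 1: the integrands are real and integrable.} With $\lambda=-i$ one has $\Phi_2=(-i)\,i\,\Psi\,dw=\Psi\,dw$, so $X_2=\Real\int\Psi\,dw$.
\begin{itemize}
\item On $w=\rho\in(0,1)$ we have $t\in(0,1)$. Here $\Fone,\Ftwo,R,(1+w^4)^{1/2}$ are real and positive and $\kappa>0$, so $\Psi(\rho)$ is real.
\item On $w=\rho e^{i\pi/4}$ we have $w^4=-\rho^4$, $t<0$ and $w^2=i\rho^2$. By \eqref{eq:psidef}, $\Psi$ is real times $1$ in its first term and real times $w^2=i\rho^2$ in its second. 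Hence $e^{3i\pi/4}\Psi$ has a real part given by an explicit real-analytic combination, and the real part in \eqref{eq:YJ} is just the relevant Cartesian component of $\Phi$ along the ray.
\item Integrability at $\rho=1$ on the real ray follows from $R\sim(1-w^4)^{-\beta}$ with $\beta<1$.
\item At $w=e^{i\pi/4}$ we have $t\to\infty$. There $\Fone,\Ftwo=O(|t|^{-a})$ and $O(|t|^{-a-1/4})$ (the dominant exponent, with $b-a=\tfrac12$), and $R\sim(1+w^4)^{\beta}$. Combined with $t\sim(1+w^4)^{-2}$, this gives a net power of $(1+w^4)$ equal to $4a+\beta-\tfrac12=0$ or better. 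The integrand therefore stays bounded, and I would verify this exponent count carefully.
\end{itemize}

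\textbf{Step 2: $r$ and $\ell$ from the real ray.} On $\Sigma$ the two rays are a planar symmetry curve and a straight line, and conjugation swaps these roles. So on $\Sigma^{*}$ the real ray is a straight segment. Along it $\Phi$ has purely real components (with $\lambda=-i$) only in the $x_2$ slot: for $w$ real, $g$ and $dh/dw$ are real, so $\Phi_1$ and $\Phi_3$ are purely imaginary. Thus $X(w)$ moves only in $x_2$. By the $D_{2d}$ symmetry, $w=0$ maps to the centre of $\tilde T$, so the corner $X(1)$ lies at distance $Y$ from the centre. Proposition~\ref{prop:corners} gives $|X_{\rm corner}|=r\sqrt3=\ell/(2\sqrt2)$. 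This yields $r=Y/\sqrt3$ and $\ell=2\sqrt2\,Y$.

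\textbf{Step 3: $L$ from the diagonal ray, via flux balancing.} This is the main step. On the $\tfrac18$-piece, the boundary integral of the outward conormal vanishes, since $\oint \eta\,ds=\oint *dX=0$ for a minimal surface.
\begin{itemize}
\item On the free half-arc the conormal is the constant face normal $n$, contributing $(L/8)\,n$.
\item On the other two boundary curves, $\eta\,ds$ equals $\Imag\Phi$ along the curve, which is the differential of the conjugate surface.
\end{itemize}
I then dot with $n$. On the straight segment the conormal is orthogonal to $x_2$ within the symmetry setup. I expect its contribution to cancel, or to vanish after projection, using that the segment direction $e_2$ has $e_2\cdot n=\pm1/\sqrt3$ and that the segment is a rotation axis. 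The symmetry-curve contribution reduces to $\int\Real[e^{3i\pi/4}\Psi]\,d\rho=J$, up to the factor $1/\sqrt3$ coming from the $n$-component. This gives $L/8=\sqrt3\,|J|$.

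I expect the bookkeeping of which component and phase ($e^{3i\pi/4}$ rather than $e^{i\pi/4}$) corresponds to $n\cdot\eta$ to be the delicate point. I would first test it numerically against the direct arc-length integral along $|w|=1$.

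\textbf{Step 4: assemble.} Substituting into \eqref{eq:arearL} gives $\Area=\tfrac12 rL=\tfrac12\cdot\frac{Y}{\sqrt3}\cdot8\sqrt3|J|=4Y|J|$, and hence $\Area/\ell^2=4Y|J|/(8Y^2)=|J|/(2Y)$.
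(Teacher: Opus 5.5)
Steps 1, 2 and 4 are sound and agree with the paper. There is one slip in Step 2: with $\lambda=-i$ the form $dh$ is imaginary, not real, on the real ray; your conclusion that only the $x_2$-component of $\Phi$ has a real part there is still correct.

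The gap is in Step 3, and it is more than a phase issue. Projecting the flux balance onto the face normal $n=(1,1,1)/\sqrt3$ annihilates exactly the term you want. Write $\Phi_{\Sigma}$ for the data at $\lambda=1$, so that $\Imag\Phi^{*}=\Imag(-i\Phi_{\Sigma})=-\Real\Phi_{\Sigma}$. Traversing $\partial S$ counterclockwise, the conormal fluxes of the $\tfrac18$-piece are:
\begin{itemize}
\item $-(I_1,0,I_3)$ on the straight segment;
\item $(L/8)\,n$ on the free arc;
\item $X_{\Sigma}(e^{i\pi/4})=(J_1,J_2,J_3)$ on the diagonal ray.
\end{itemize}
Along the diagonal, $dh/g\in e^{i\pi/4}\RR\,d\rho$, $g\,dh\in e^{3i\pi/4}\RR\,d\rho$ and $dh\in i\RR\,d\rho$. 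Hence $\Real\Phi_{\Sigma,1}=-\Real\Phi_{\Sigma,2}$ and $\Real\Phi_{\Sigma,3}=0$ pointwise, so the diagonal flux is identically orthogonal to $n$. The straight segment's flux, by contrast, has $n$-component $-(I_1+I_3)/\sqrt3\neq0$, numerically $-\sqrt3\,|J|$. It is the segment, not the symmetry curve, that balances the arc. Dotting with $n$ therefore yields $L/8=(I_1+I_3)/\sqrt3$, in which $J$ does not appear at all. Your ``symmetry-curve contribution $J$, up to a factor $1/\sqrt3$'' is in fact zero under that projection.

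The repair is to project onto $e_2$, the axis of the straight segment.
\begin{itemize}
\item The segment term vanishes, because its conormal is orthogonal to $e_2$; you already noted this fact.
\item The free arc contributes $(L/8)\,n\cdot e_2=L/(8\sqrt3)$.
\item The diagonal contributes exactly $J_2=J$, since there $\Real\Phi_{\Sigma,2}=\Real[e^{3i\pi/4}\Psi]\,d\rho$.
\end{itemize}
Hence $L/(8\sqrt3)+J=0$, i.e.\ $L=8\sqrt3\,|J|$ with $J<0$. The factor $1/\sqrt3$ belongs to the arc term, not to $J$.

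With this correction your argument is the paper's proof written in flux language. Because $*dX^{*}=-dX_{\Sigma}$, flux balance on $\Sigma^{*}$ is just the closing of the triangle $X_{\Sigma}(0)=0$, $X_{\Sigma}(1)=(I_1,0,I_3)$, $X_{\Sigma}(e^{i\pi/4})$ on the Plateau disk. The paper observes that the edge displacement $X_{\Sigma}(e^{i\pi/4})-X_{\Sigma}(1)$ is parallel to $(1,1,1)$, so all its components equal its second one, $J-0$. It then transfers the edge length $\sqrt3\,|J|$ to the free arc of $\Sigma^{*}$ by the isometry of the associate family. Your version gets that length directly from the constant conormal along the free arc instead of through the isometry. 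It uses the same two inputs: $I_2=0$, and the edge direction $(1,1,1)/\sqrt3$.
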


\begin{proof}
	On the ray $\arg w=0$ one has $t=4\rho^{4}/(1+\rho^{4})^{2}\in[0,1)$
	real, so $\Fone,\Ftwo,R$ are real; hence $\Phi_{1},\Phi_{3}$ are real and
	$\Phi_{2}=i\Psi\,d\rho$ is purely imaginary. With $\lambda=-i$ (\S2.4)
	this gives $X^{*}(1)=(0,\pm Y,0)$, so by
	Proposition~\ref{prop:corners} $Y=|X_{\mathrm{corner}}|=r\sqrt3$ and
	$\ell=2\sqrt6\,r=2\sqrt2\,Y$.
	
	On the ray $\arg w=\tfrac\pi4$ one has $w^{4}=-\rho^{4}$, so
	$t=-4\rho^{4}/(1-\rho^{4})^{2}<0$; with $dw=e^{i\pi/4}d\rho$ the second
	component of $\Phi$ is $e^{3i\pi/4}\Psi\,d\rho$, whence $J$ is the second
	component of $X_{\Sigma}(e^{i\pi/4})$, since $X_{\Sigma}(0)=0$. By \S2.4
	the arc $|w|=1$ maps to a straight segment of $Q$ in the direction
	$(1,1,1)/\sqrt3$, so $X_{\Sigma}(e^{i\pi/4})-X_{\Sigma}(1)$ has three
	equal components, each $=J$; its length is $\sqrt3\,|J|$ and, the
	$D_{2d}$ orbit of the fundamental domain having order $8$,
	$L=8\sqrt3\,|J|$. Finally \eqref{eq:arearL} gives
	$\Area=\tfrac12(Y/\sqrt3)\,(8\sqrt3\,|J|)=4Y|J|$ and
	$\Area/\ell^{2}=4Y|J|/(8Y^{2})=|J|/(2Y)$.
\end{proof}

\begin{lemma}[Straightness certificate]\label{lem:cert}
	Let $I_{k}$ and $J_{k}$, $k=1,2,3$, denote the three components of the
	ray integrals of \eqref{eq:YJ}, so that $X_{\Sigma}(1)=(I_{1},0,I_{3})$,
	$I_{2}=0$ identically, $J_{2}=J$, and
	$X_{\Sigma}(e^{i\pi/4})=(J_{1},J_{2},J_{3})$. Then
	\begin{equation}\label{eq:cert}
	J_{1}-I_{1}\;=\;J_{2}\;=\;J_{3}-I_{3}.
	\end{equation}
	Numerically all three equal
	$-7.9005445743870256267164139949\ldots$, agreeing to $29$ significant
	digits. Since \eqref{eq:cert} holds if and only if the image of $|w|=1$
	is a straight segment in the direction $(1,1,1)/\sqrt3$, it
	simultaneously certifies $\kappa$, the exponent
	$(1-t)^{-(1+\beta)/2}$ and $R=(1-w^{4})^{-\beta}(1+w^{4})^{\beta}$, the
	pullback \eqref{eq:tofw}, and the two hypergeometric representations used
	for $t<0$ and near $\rho=1$. 
\end{lemma}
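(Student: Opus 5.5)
Writing $X_\Sigma(w)=\Real\int_0^w\Phi$ with $\lambda\in\RR$ and $X_\Sigma(0)=0$, path-independence of $\int\Phi$ on the simply connected sector $S$ gives
\[
X_\Sigma(e^{i\pi/4})-X_\Sigma(1)=\Real\int_{\{|w|=1\}}\Phi ,
\]
where the integral runs along the arc from $w=1$ to $w=e^{i\pi/4}$. The left side is $(J_1-I_1,\,J_2-I_2,\,J_3-I_3)$, and $I_2=0$ because on the ray $\arg w=0$ the component $\Phi_2$ is purely imaginary (proof of Proposition~\ref{prop:twoint}). So \eqref{eq:cert} says exactly that this difference vector is a multiple of $(1,1,1)$.

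The plan is to show that the image of the arc is a straight segment in that direction. On $\Sigma$ the arc $|w|=1$ is an edge of $Q$ (\S2.2). A straight line on a minimal surface is characterized in Weierstrass terms by two conditions: the Gauss map is orthogonal to the line, and $\Real\Phi$ along the curve is parallel to it.

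\textbf{Step 1: Gauss-map condition.} Under \eqref{eq:tofw}, $|w|=1$ maps to $t\in(1,\infty)$. By \eqref{eq:trifct}, $g$ maps this interval onto one side of the circular-arc triangle. I would check that this side lies on the great circle orthogonal to the tetrahedral direction $(1,1,1)/\sqrt3$. It is the side through the cuboctahedral corner $g(1)=\sqrt2-1$, which is fixed by $\kappa$ in \eqref{eq:kappa}, and it is the side opposite the $\pi/4$ vertex at $t=0$.

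\textbf{Step 2: tangent condition.} On the arc, write $dw=iw\,d\theta$ and substitute into \eqref{eq:w1}--\eqref{eq:w3}. For real $\lambda$, I would verify that $\Real\Phi(\partial_\theta)$ is parallel to $(1,1,1)$. Equivalently, with $N\perp(1,1,1)$ from Step 1, the conormal lies in the plane orthogonal to the line, which is the standard Schwarz-reflection criterion.

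The factor $R=(1-w^4)^{-\beta}(1+w^4)^{\beta}$ acquires the phase $e^{\mp i\pi\beta}$ when $1-w^4$ crosses into the region with negative real part. The $F_i$ must be continued across $t=1$ using the Kummer connection formulas; this is exactly the mechanism by which the pieces ``used for $t<0$ and near $\rho=1$'' enter. Tracking these phases, together with $\Gamma$-value connection coefficients matching $\kappa$, is the main obstacle. I would try first to avoid explicit connection formulas by using only the symmetric-square form \eqref{eq:symsq}. Since $g$ is a Schwarz triangle map, reflection across the side of Step 1 is an anti-Möbius involution $\sigma$ preserving the equation. Then $(y_1,y_2)$ on $(1,\infty)$ is, up to a common phase, a real linear image of a real basis, and $dq$ has a fixed phase there. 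Together these make $\Phi|_{\text{arc}}$ equal to a fixed complex multiple of a vector field whose real part lies along the fixed line of the rotation induced by $\sigma$, namely the axis $(1,1,1)$.

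\textbf{Step 3: conclusion.} Integrating the tangent condition from $w=1$ to $w=e^{i\pi/4}$ gives $X_\Sigma(e^{i\pi/4})-X_\Sigma(1)=s(1,1,1)$ for some real $s$. Reading off components gives $J_1-I_1=J_2=J_3-I_3=s$, which is \eqref{eq:cert}.

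For the converse in the ``if and only if'': three equal components mean the chord is parallel to $(1,1,1)$. The arc's image already lies on a line, since it is an edge of $Q$ by \S2.2, so the chord determines the direction of that line.

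The certification remark is heuristic and falls outside the proof. A perturbation of $\kappa$, of the exponents, or of the continuation would break the symmetry $\sigma$ and generically destroy \eqref{eq:cert}. The 29-digit agreement is offered as numerical evidence, not proved.
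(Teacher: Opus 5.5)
Your Step~2 has a genuine gap, and it carries the entire content of the lemma. Reflection $\sigma$ of the Gauss sphere in the great circle with pole $v=(1,1,1)/\sqrt3$ cannot decide straightness. By the boundary dictionary (\S A.2), the same anti-M\"obius reflection of $g$ arises from a straight line parallel to $v$ and from a planar symmetry curve in a plane $\perp v$. Moreover, $g$ is shared by the whole associate family.

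Once corrected, your symmetric-square argument gives the following.
\begin{itemize}
\item On $(1,\infty)$ one has $(y_1,y_2)=c\,A\,(u_1,u_2)$, with $u_1,u_2$ real solutions, $c\in\CC^{*}$, and $A\in SU(2)$ carrying $\RR\cup\{\infty\}$ onto $|g+1+i|=\sqrt3$. The matrix $A$ is not real; a real matrix, as in your wording, would make $g$ real on the arc.
\item $dq(\gamma')$ is a positive multiple of $\lambda e^{i\pi(1+\beta)/2}$ there. Note that $\Real(1-w^4)\ge 0$ on $|w|=1$, so $R$ just has constant phase $e^{i\pi\beta/2}$; no branch is crossed.
\end{itemize}
Hence $\langle\Phi(\gamma'),v\rangle=\mu\,(u_1^2+u_2^2)\,dq(\gamma')$ with $\mu\in\CC^{*}$ constant. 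So the image of the arc merely makes a \emph{constant angle} with $v$. It is straight if and only if this constant phase is real, a single real condition on $\mu\lambda e^{i\pi(1+\beta)/2}$, and you never check it. Your argument is unchanged when $\lambda$ is multiplied by $i$, so it would equally ``prove'' the arc straight for $\lambda=-i$. There it is in fact the planar free-boundary arc of $\Sigma^{*}$.

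The phase can be fixed at the regular corner $w=e^{i\pi/4}$.
\begin{itemize}
\item For real $\lambda$, on the ray $\arg w=\pi/4$ the quantities $\Fone,\Ftwo,R,1+w^4$ are real, $t^{1/4}$ has phase $e^{i\pi/4}$, and $dq$ is a real multiple of $e^{i\pi/4}d\rho$. So $\Real\Phi_3=0$ and $\Real\Phi_1=-\Real\Phi_2$: that ray is a straight segment in direction $(1,-1,0)$.
\item At $w=e^{i\pi/4}$, $\Phi$ is regular and nonzero (Condition~3 of \S A.9; $K_1,K_2$ are analytic and nonzero at $\rho=1$). The normal there is $N=(1,1,-2)/\sqrt6$.
\item Conformality at the right-angled corner makes the arc's image tangent to the direction in $N^{\perp}$ orthogonal to $(1,-1,0)$, namely $(1,1,1)$.
\item Since $i\gamma'$ is radial there, $\langle\Phi(\gamma'),v\rangle=\langle dX(\gamma'),v\rangle-i\langle dX(i\gamma'),v\rangle$ is real and nonzero at that endpoint. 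Hence it is real along the whole arc, and your Step~3 then gives \eqref{eq:cert}.
\end{itemize}
Your Step~1 is fine: the circle through $\sqrt2-1$ meeting the real axis at angle $\pi\beta/2$ and the line $\arg g=\pi/4$ orthogonally is forced to be $|g+1+i|=\sqrt3$.

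Repaired this way, your route would be an analytic proof, which is more than the paper gives. The paper offers no argument for the straightness of the arc beyond the design of the data in Appendix~A. It supports \eqref{eq:cert} only through the $29$--$30$ digit agreement of the ray integrals in Appendix~B.5--B.6, reading the ``if and only if'' as a consistency certificate. Your converse invokes \S2.2 (``the arc is an edge of $Q$''), which is the fact under proof. Once the phase is settled, that converse is simply redundant.
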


\noindent
Equivalently, $I_{1}=2|J|$, $I_{3}=|J|$, $I_{2}=0$, $J_{1}=-J_{2}=|J|$
and $J_{3}=0$, so that $X_{\Sigma}(0)=0$, $X_{\Sigma}(1)=|J|(2,0,1)$ and
$X_{\Sigma}(e^{i\pi/4})=|J|(1,-1,0)$ are precisely the centroid, a vertex
and an edge midpoint of the equilateral skew quadrilateral $Q$, with
$|X_{\Sigma}(1)|/|X_{\Sigma}(e^{i\pi/4})|=\sqrt{5/2}$ and edge length
$c_{i}=2\sqrt3\,|J|$; the last two relations are the ones that link the
two rays, the others holding pointwise by the reality of
$\Fone,\Ftwo,R$.

\begin{remark}[vertex constant and truncation law]\label{rem:vertex}
	The vertex $w=1$ of $Q$ is the only singular endpoint of
	\eqref{eq:YJ}; there $\Psi\sim(1-\rho)^{-\beta}$, and the substitution
	$1-\rho=\tau^{\,p}$, $p=1/(1-\beta)$, removes it \emph{exactly}: the
	factors $\tau^{\,p-1}$ and $(1-\rho^{4})^{-\beta}$ cancel identically,
	and the transformed integrand has the finite limit
	\begin{equation}\label{eq:L0}
	L_{0}=\frac{4\sqrt2}{1-\beta}\;2^{-\beta}\,\Fone(1)\Ftwo(1)
	=14.239368600546262861\ldots,
	\end{equation}
	verified against \eqref{eq:YJ} to $20$ digits. The exponent
	$1-\beta=\arccos(\tfrac13)/\pi$ is exactly the Plateau vertex exponent
	for two straight edges meeting at angle $\pi(1-\beta)$. 
\end{remark}

\noindent
Consequently any scheme truncating at parameter distance $u_{\min}$ from
the vertex incurs a one-signed relative error $\approx 8L_{0}u_{\min}/L$,
while any scheme resolving the vertex at finite mesh scale $h$ incurs
$O(h^{1-\beta})$ with the poor exponent $1-\beta=0.3918\ldots$; this is
why vertex-limited methods stall near six digits. As a check of the law,
an earlier computation of the present data by ODE integration along the
free boundary, truncated at $u_{\min}=10^{-6}$, returned $L$ and
$\Area(\Sigma^{*})$ low by $1.0406\times10^{-6}$ in relative terms,
matching the prediction to five digits. By contrast the integrands of
\eqref{eq:YJ} are analytic at $\rho=0$ and at $\rho=1$ on the ray
$\arg w=\pi/4$, and algebraically desingularized at the vertex on the ray
$\arg w=0$, so \eqref{eq:YJ} converges geometrically and the endpoints
are evaluated, not approached.

\pagebreak

\section{Numerical summary}\label{sec:NumSum}

\begin{table}[h]
\centering
\begin{tabular}{lll}
\toprule
quantity & value & status\\
\midrule
$\beta=\arccos(-1/3)/\pi$ & $0.608173447969392729829\ldots$ & exact input\\
$a$, $b$, $c$ & $-0.0270433620$, $0.4729566380$, $3/4$ & linear in $\beta$\\
$\kappa$ & $0.239730321061124109007\ldots$ & exact, \eqref{eq:kappa}\\
triangle angles & $45^{\circ},\ 90^{\circ},\ \approx 54.7356^{\circ}$ & no accessory parameter\\
$\int K$ & $-1.3593476378164877\ldots$ & $=-2\pi(2\beta-1)$\\
inradius $r$ of $\tilde{T}$ & $10.4987669370161548324712311$ & \\
edge $\ell=2\sqrt6\,r$ & $51.4332438481844713471416932$ & identity exact\\
$L=|\partial\Sigma^{*}|=4c_{i}$ & $109.4731568824076774556327$ & \\
$\Area(\Sigma^{*})$ & $574.666579983902\ldots$ & three routes agree, $12$ digits\\
$\Area/\ell^{2}$ & $0.2172341554075574483760009$ & \\
saving vs.\ flat square & $13.106\%$ & $=1-4\,\Area/\ell^2$\\
\bottomrule
\end{tabular}
\end{table}

\noindent
\textbf{Verified identities.} The straightness certificate
\eqref{eq:cert} returns three values agreeing in all $30$ computed digits
at $\mathrm{prec}=80$ (they split in the $29$th at $\mathrm{prec}=60$),
equivalently $I_{1}=2|J|$ and $I_{3}=|J|$ to that accuracy: the images of
$w=0,1,e^{i\pi/4}$ are exactly the centroid, a vertex and an edge
midpoint of the equilateral skew quadrilateral $Q$, with
$|X_{\Sigma}(1)|/|X_{\Sigma}(e^{i\pi/4})|=\sqrt{5/2}$. The vertex
constant \eqref{eq:L0} agrees with its closed form to $20$ digits. The
relations $r=Y/\sqrt3$, $\ell=2\sqrt2\,Y$, $J_{1}=-J_{2}$, $J_{3}=0$ and
$I_{2}=0$ hold identically by \eqref{eq:twoint} and the reality of
$\Fone,\Ftwo,R$ on the two rays. Independently, the earlier
ODE-and-quadrature computation gave $r$ to $12$ digits, planarity of
each boundary arc to $1.1\times10^{-11}$ relative, orthogonality
$\max|N\cdot n|=2.2\times10^{-12}$, four face normals with pairwise inner
products $-\tfrac13$ (so $\tilde{T}$ is regular), $\Area(\Sigma)=\Area(\Sigma^{*})$
by isometry to $8\times10^{-8}$, and $\Area=\tfrac12 rL$ to $12$ digits;
its $1.04\times10^{-6}$ deficit in $L$ and $\Area$ is accounted for
exactly by Remark~\ref{rem:vertex}. The total curvature
$\int K=2\pi-4\arccos(-\tfrac13)$ is exact by Proposition~\ref{prop:extocrv}.

\section{Ledger of what is and is not established}

\begin{enumerate}
\item The representation \eqref{eq:data}--\eqref{eq:X} with
\eqref{eq:kappa} and \eqref{eq:gprime}: established, and verified
numerically on five independent structural identities and against
an independent Surface Evolver computation.
\item The non-algebraicity theorem (\S5): established.
\item The $\beta$-family and the identification of $\beta=\tfrac23$ with
a classical Schwarz-type surface \cite{Schwarz}: conjectured.
\item Everything concerns \emph{stationarity}. Smyth guaranteed three
free-boundary disks in any tetrahedron  \cite{Smyth1984, Choe2025}; in 
the regular case symmetry makes them congruent, but \emph{global minimality} among
volume-halving competitors (e.g.,\ vertex-cutting caps) is untouched here.
The $S_{4}$ symmetry of $\Sigma^{*}$ does exchange the two complementary
solids, so the volume constraint is inactive and the Lagrange multiplier
vanishes: this is why $H=0$ rather than $H=\text{const}$.
\item No closed form for $0.2172341554\ldots$ is claimed or expected; the
exact content resides in the representation (not the constant).
\end{enumerate}

\pagebreak

\appendix

\section{How the representation was found}
	
	This appendix is expository. It answers four questions --- what a Gauss
	map is, what a Schwarz triangle function is, how the hypergeometric
	function entered, and precisely how the parameters $a,b,c$ of
	\eqref{eq:abc} were computed --- and then records, candidly, which earlier
	attempts failed and why. Nothing here is new mathematics; the point is
	that the derivation is short and forced once the right objects are
	identified, and a reader should be able to reconstruct it unaided.
	
	\subsection{The Gauss map and the Weierstrass representation}\label{subsec:GmWr}
	
	Let $X:\Omega\to\RR^{3}$ be a conformal minimal immersion of a planar
	domain. Its \emph{Gauss map} is the unit normal
	$N:\Omega\to\Sph^{2}$, and $g$ denotes its stereographic image,
	\begin{equation}\label{B:stereo}
	g=\frac{N_{1}+iN_{2}}{1-N_{3}},
	\qquad\text{inversely}\qquad
	N=\frac{1}{|g|^{2}+1}\bigl(2\Real g,\ 2\Imag g,\ |g|^{2}-1\bigr).
	\end{equation}
	Thus $g=0$ is the south pole $N=(0,0,-1)$ and $g=\infty$ the north pole.
	The single fact that makes minimal surface theory complex-analytic is:
	
	\begin{quote}
		\emph{$X$ is a conformal minimal immersion $\iff$ $g$ is meromorphic and
			there is a holomorphic $1$-form $dh$ (the \emph{height differential},
			$h=x_{3}+ix_{3}^{*}$) such that $dh$ has a zero of order $k$ exactly where
			$g$ has a zero or pole of order $k$.}
	\end{quote}
	
	Explicitly (Weierstrass, Enneper),
	\begin{equation}\label{B:weier}
	X=\Real\int^{w}\Phi,
	\qquad
	\Phi=\Bigl(\tfrac12\bigl(g^{-1}-g\bigr),\ \tfrac{i}{2}\bigl(g^{-1}+g\bigr),\ 1\Bigr)dh 
	\end{equation}
	which is \eqref{eq:X}. Three consequences are used constantly below.
	
	\medskip\noindent
	\textbf{(i) Conformality is algebraic.} $\Phi\cdot\Phi=\tfrac14(g^{-1}-g)^{2}
	-\tfrac14(g^{-1}+g)^{2}+1=0$ identically. Any triple of the form
	$\bigl(\tfrac12(A-C),\tfrac{i}{2}(A+C),B\bigr)$ with $AC=B^{2}$ is
	automatically null; this is why \eqref{eq:w1}--\eqref{eq:w3} need only satisfy
	$\bigl(\tfrac{dh}{g}\bigr)(g\,dh)=dh^{2}$.
	
	\medskip\noindent
	\textbf{(ii) The induced metric.}
	\begin{equation}\label{B:metric}
	ds=\tfrac12\bigl(|g|+|g|^{-1}\bigr)\,|dh| ,
	\end{equation}
	so branch points occur exactly where this vanishes, i.e.,\ where the zero
	of $dh$ fails to match a zero/pole of $g$.
	
	\medskip\noindent
	\textbf{(iii) Total curvature is spherical area.} The Gauss map of a
	minimal surface is conformal (anti-conformal for the usual orientation),
	and the pullback of the area element of $\Sph^{2}$ under \eqref{B:stereo}
	is $4|dg|^{2}/(1+|g|^{2})^{2}$. Hence
	\begin{equation}\label{B:gaussarea}
	-\int_{\Omega}K\,dA
	=\int_{\Omega}\frac{4\,|g'(w)|^{2}}{\bigl(1+|g(w)|^{2}\bigr)^{2}}\,dA_{w}
	=\Area_{\Sph^{2}}\bigl(g(\Omega)\bigr)\ \text{counted with multiplicity}.
	\end{equation}
	This is the integral evaluated numerically in \S\ref{sec:NumSum}, and
	Proposition~\ref{prop:extocrv} is \eqref{B:gaussarea} with $g(\Omega)$ identified in
	\S A.3.
	
	\medskip\noindent
	\textbf{(iv) The associate family.} Replacing $dh$ by $e^{i\theta}dh$
	leaves $g$ untouched and produces a new minimal surface, locally
	isometric to the first. $\theta=\pi/2$ gives the \emph{conjugate}
	surface. \textbf{The Gauss map is shared by the whole associate family.}
	This single observation is the hinge of the entire computation: it lets us
	determine $g$ from the free-boundary problem (where the data are the four
	face normals of the tetrahedron) and then use the very same $g$ for the
	Plateau problem in $Q$, where the data are four straight lines.
	
	\subsection{What the boundary conditions say about $g$}
	
	Two classical boundary behaviors occur, and under conjugation they
	exchange.
	
	\begin{lemma}[Boundary dictionary]\label{B:dict}
		Let $\gamma$ be a boundary curve of a minimal surface.
		\begin{enumerate}
			\item[\textup{(a)}] If $X(\gamma)$ is a \emph{straight line segment} of
			direction $v$, then along $\gamma$ the normal is perpendicular to $v$:
			$N\cdot v=0$. The surface is invariant under $180^{\circ}$ rotation about
			the line.
			\item[\textup{(b)}] If $X(\gamma)$ is a \emph{planar geodesic} lying in a
			plane $P$ of unit normal $n$, which the surface meets orthogonally, then
			along $\gamma$ the normal is tangent to $P$: $N\cdot n=0$. The surface is
			invariant under reflection in $P$.
		\end{enumerate}
		Conjugation $dh\mapsto i\,dh$ interchanges \textup{(a)} and \textup{(b)},
		carrying a straight line of direction $v$ to a planar geodesic in a plane
		of normal $v$.
	\end{lemma}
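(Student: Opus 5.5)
The plan is to prove parts (a) and (b) directly from the Weierstrass representation \eqref{B:weier}, using the conformality relation (i) and the metric (ii). Then we treat the interchange under conjugation as the statement that $\Phi\mapsto i\Phi$ swaps the real and imaginary parts of $\Phi$ along $\gamma$.

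First, fix a boundary arc $\gamma$, parametrized by arclength $s$ in the conformal coordinate $w=u+iv$, and assume $\gamma$ lies on the real axis $v=0$; this is the situation for the three sides of $S$ after a rotation of $w$. Since $\Phi=(X_u-iX_v)\,dw$, along $\gamma$ we have $\Real\Phi(\partial_u)=X_u$, the tangent, and $-\Imag\Phi(\partial_u)=X_v$, the conormal direction. For (a), if $X(\gamma)$ lies on a line of direction $v$, then $X_u\parallel v$. The normal $N$ is orthogonal to $X_u$, hence $N\cdot v=0$. Invariance under the $180^\circ$ rotation is the Schwarz reflection principle for minimal surfaces. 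Concretely, the map $w\mapsto\bar w$ composed with the rotation about the line yields a minimal surface agreeing with $X$ along $\gamma$ with the same tangent plane, so it coincides with $X$ by uniqueness of the Björling problem. I will quote this classical result rather than reprove it.

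For (b), orthogonality to $P$ means the conormal $X_v$ is parallel to $n$, so $N\perp n$, i.e.\ $N\cdot n=0$. In addition $X_u\perp n$, since the curve lies in $P$. The reflection in $P$ again follows from Björling uniqueness, or from Schwarz reflection for the harmonic coordinates: $X\cdot n$ satisfies a Neumann condition and the tangential components satisfy a Dirichlet-like condition.

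For the interchange, let $X^{*}=\Real\int i\Phi=-\Imag\int\Phi$. Along $\gamma$ we get $X^{*}_u=-\Imag\Phi(\partial_u)=X_v$ and $X^{*}_v=-X_u$. So in case (a), with $X_u\parallel v$ and $X_v\perp v$, the conjugate satisfies $X^{*}_u\perp v$, so $X^{*}(\gamma)$ lies in a plane with normal $v$. It also satisfies $X^{*}_v\parallel v$, so $X^{*}$ meets that plane orthogonally. The Gauss map is unchanged by (iv), consistent with $N\cdot v=0$. The same identities run backwards give (b)$\Rightarrow$(a): if $X_u\perp n$ and $X_v\parallel n$, then $X^{*}_v=-X_u\perp n$ and $X^{*}_u=X_v\parallel n$, so $X^{*}(\gamma)$ is a segment of direction $n$. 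The arc $X(\gamma)$ is a geodesic because its curvature vector is normal to $P$, hence lies along the conormal, i.e.\ tangent to the surface normal plane.

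The main obstacle is not computational but one of hypotheses. The reflection statements need $\gamma$ to be a free (non-corner) boundary arc with the surface extending smoothly up to it. I will state that the argument applies on open arcs and cite the Schwarz–Björling reflection principle. I will also verify that the sign convention $dh\mapsto i\,dh$ matches $X^{*}=\Real\int i\Phi$ used in \S\ref{subsec:two-srf}.
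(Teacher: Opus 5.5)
The paper gives no proof of this lemma. It states it as classical background (Schwarz reflection, and the straight-line/planar-geodesic correspondence for conjugate surfaces) and uses it only to obtain the condition $N\cdot u=0$. Your route supplies more than the paper does, and its skeleton is the standard derivation and is correct. You write $\Phi=(X_u-iX_v)\,dw$, read off tangent and conormal along a straightened boundary arc, and use $X^*_u=X_v$, $X^*_v=-X_u$ for $X^*=\Real\int i\Phi$. However, two steps are wrong as written and need replacing.

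\textbf{The geodesic claim.} A curve lying in a plane has its curvature vector in that plane, not normal to it. A curvature vector along the conormal would also give zero normal curvature, which makes the curve an asymptotic line, not a geodesic. Two correct arguments for $X^*(\gamma)$:
\begin{enumerate}
\item Its curvature vector lies in the plane with normal $v$ and is orthogonal to $X^*_u$. The shared normal $N$ also lies in that plane, since $N\cdot v=0$, and is orthogonal to $X^*_u$. So the two are parallel, and the geodesic curvature vanishes.
\item By \eqref{B:metric}, the metric $\tfrac12(|g|+|g|^{-1})|dh|$ is unchanged under $dh\mapsto i\,dh$, so $X$ and $X^*$ induce the same metric on the $w$-domain. $\gamma$ is a geodesic of that metric because $X(\gamma)$ is straight, and geodesic curvature is intrinsic.
\end{enumerate}

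\textbf{The alternative reflection argument in (b).} You have the boundary conditions interchanged. On $\gamma$, $X\cdot n$ is constant, which is a Dirichlet condition. Its normal derivative $X_v\cdot n=\pm|X_u|$ is not zero. The components tangent to $P$ have vanishing normal derivative because $X_v\parallel n$, which is a Neumann condition. With the correct assignment, $X\cdot n$ reflects oddly about its constant value and the tangential components reflect evenly; this is exactly reflection in $P$. With your assignment the harmonic reflection principle does not apply, because the stated conditions are false.

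\textbf{Minor points.}
\begin{enumerate}
\item Your Bj\"orling/identity-theorem argument presupposes that $X$ is already defined on both sides of $\gamma$. That holds in the paper's setting, where $\Phi$ is analytic across the open sides of $S$. Otherwise it is the harmonic reflection, with the correct conditions, that produces the extension.
\item The arc $|w|=1$ is not brought to the real axis by a rotation. Use a local conformal chart such as $-i\log w$.
\end{enumerate}
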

	
	The crucial point is that \emph{both} conditions have the same form,
	$N\cdot u=0$ for a fixed vector $u$. Under \eqref{B:stereo},
	\[
	N\cdot u=0
	\iff
	u_{3}|g|^{2}+2u_{1}\Real g+2u_{2}\Imag g-u_{3}=0 ,
	\]
	which is the equation of a \emph{circle} (a line if $u_{3}=0$) in the
	$g$-plane --- namely the stereographic image of the great circle with pole
	$u$. For $u=(1,1,1)/\sqrt3$ this reads
	\begin{equation}\label{B:circle}
	|g|^{2}+2\Real g+2\Imag g-1=0
	\iff
	\bigl|g+1+i\bigr|=\sqrt3 ,
	\end{equation}
	which is exactly the residual tested to $12$ digits in \S\ref{subsec:Gauss}.
	
	\medskip
	\noindent
	Therefore:
	\begin{equation}\label{B:key}
	\boxed{\ \text{the Gauss image of the surface is a \emph{circular-arc
				polygon}, and $g$ is a Schwarz map onto it.}\ }
	\end{equation}
	This is the entire content of the discovery. Everything after
	\eqref{B:key} is bookkeeping.
	
	\subsection{Identifying the Gauss image}
	
	The free-boundary disk $\Sigma^{*}$ has four boundary arcs, one on each
	face of $\tilde{T}$. Take the four outward face normals of a regular tetrahedron,
	\begin{equation}\label{B:normals}
	\nu_{1}=\tfrac{1}{\sqrt3}(1,1,1),\quad
	\nu_{2}=\tfrac{1}{\sqrt3}(1,-1,-1),\quad
	\nu_{3}=\tfrac{1}{\sqrt3}(-1,1,-1),\quad
	\nu_{4}=\tfrac{1}{\sqrt3}(-1,-1,1)
	\end{equation}
	so $\sum_{i}\nu_{i}=0$ and $\nu_{i}\cdot\nu_{j}=-\tfrac13$ for $i\neq j$.
	By \S A.2 the Gauss image is bounded by arcs of the four great circles
	with poles $\nu_{i}$.
	
	\medskip\noindent
	\textbf{The arrangement.} Four great circles with tetrahedral poles cut
	$\Sph^{2}$ into $6$ spherical squares and $8$ spherical triangles: the
	\emph{spherical cuboctahedron}. Its $12$ vertices are the cube
	edge-midpoint directions $(\pm1,\pm1,0)/\sqrt2$ and permutations; the
	square centers are the $6$ cube face-center directions, the triangle
	centers the $8$ cube vertex directions. At each vertex two squares and
	two triangles meet, so
	\[
	2\theta_{\square}+2\theta_{\triangle}=2\pi ,
	\qquad
	\theta_{\square}=\arccos(-\tfrac13)=\pi\beta,
	\qquad
	\theta_{\triangle}=\arccos(\tfrac13)=\pi(1-\beta).
	\]
	Consistency (Gauss--Bonnet on $\Sph^{2}$):
	$8\bigl(3\theta_{\triangle}-\pi\bigr)+6\bigl(4\theta_{\square}-2\pi\bigr)
	=8(0.55157)+6(1.35935)=4.4126+8.1561=12.5664=4\pi$. \checkmark
	
	\medskip\noindent
	\textbf{Which cell?} The Gauss image is one spherical \emph{square},
	since the boundary has four arcs and the four angles must be
	$\theta_{\square}=\pi\beta$. Choose the square centered at the south pole
	$N=(0,0,-1)$, i.e.,\ at $g=0$. Its four vertices are the cuboctahedron
	vertices with $N_{3}=-1/\sqrt2$, namely $(\pm1,0,-1)/\sqrt2$ and
	$(0,\pm1,-1)/\sqrt2$. Stereographically, from \eqref{B:stereo},
	\begin{equation}\label{B:corners}
	N=\frac{1}{\sqrt2}(1,0,-1)\ \longmapsto\
	g=\frac{1/\sqrt2}{1+1/\sqrt2}=\frac{1}{\sqrt2+1}=\sqrt2-1 
	\end{equation}
	and likewise the four corners are
	\begin{equation}
	g=\pm(\sqrt2-1),\qquad g=\pm i(\sqrt2-1).
	\end{equation}
	The edge joining $(1,0,-1)/\sqrt2$ to $(0,1,-1)/\sqrt2$ has spherical
	midpoint $(1,1,-2)/\sqrt6$, which satisfies
	$(1,1,-2)\cdot(1,1,1)=0$: that edge lies on the great circle with pole
	$\nu_{1}$, in agreement with \eqref{B:circle}. Its stereographic image is
	\begin{equation}\label{B:midpt}
	g=\frac{(1+i)/\sqrt6}{1+2/\sqrt6}=\frac{1+i}{\sqrt6+2}
	=\frac{\sqrt2\,(\sqrt6-2)}{2}\,e^{i\pi/4}
	=(\sqrt3-\sqrt2)\,e^{i\pi/4}.
	\end{equation}
	Both \eqref{B:corners} and \eqref{B:midpt} are the targets verified to
	$12$ digits in \S\ref{subsec:Gauss}; both lie on the circle \eqref{B:circle},
	since $|\sqrt2-1+1+i|=|\sqrt2+i|=\sqrt3$ and
	$\bigl|(1+i)\bigl[\tfrac{\sqrt3-\sqrt2}{\sqrt2}+1\bigr]\bigr|
	=\bigl|(1+i)\tfrac{\sqrt3}{\sqrt2}\bigr|=\sqrt3$.
	
	\medskip\noindent
	\textbf{The area, hence $\int K$.} The spherical excess of the square is
	$4\pi\beta-2\pi=2\pi(2\beta-1)=1.3593476$, and \eqref{B:gaussarea} with
	$\deg g=1$ gives Proposition~\ref{prop:extocrv}. The numerical evaluation of
	\eqref{B:gaussarea} returning $1.3593476$ rather than $2.7186952$ is what
	\emph{established} $\deg g=1$, i.e.,\ that $g$ is injective on the
	fundamental domain.
	
	\subsection{Reduction by symmetry to a circular-arc \emph{triangle}}
	
	The spherical square has its own symmetry group of order $8$ (dihedral,
	generated by the $90^{\circ}$ rotation about its center $g=0$ and a
	reflection); this is the $D_{2d}$ of the paper acting on the Gauss sphere.
	A fundamental domain for that action is one eighth of the square: the
	circular-arc triangle with vertices
	\begin{equation}\label{B:tri}
	\underbrace{g=0}_{\text{square center}},\qquad
	\underbrace{g=\sqrt2-1}_{\text{square corner}},\qquad
	\underbrace{g=(\sqrt3-\sqrt2)e^{i\pi/4}}_{\text{edge midpoint}} 
	\end{equation}
	bounded by
	\begin{itemize}
		\item the segment of the \emph{real axis} from $0$ to $\sqrt2-1$;
		\item the arc of $|g+1+i|=\sqrt3$ from $\sqrt2-1$ to
		$(\sqrt3-\sqrt2)e^{i\pi/4}$;
		\item the segment of the ray $\arg g=\pi/4$ back to $0$.
	\end{itemize}
	The first and third sides are straight because a great circle through the
	two poles $g=0,\infty$ projects to a line through the origin. (In 
	\S\ref{subsec:Gauss} these are the checks ``$\Imag g=0$ on $(0,1)$'' and
	``$\arg g=\pi/4$ on the $\pi/4$ ray'', both exact to full precision.)
	
	\medskip\noindent
	\textbf{The three angles.}
	\begin{itemize}
		\item At $g=0$: the full angle $2\pi$ at the square's center is divided
		by $8$, giving $\pi/4$. (Directly: the two sides are the rays
		$\arg g=0$ and $\arg g=\pi/4$.)
		\item At the corner $g=\sqrt2-1$: half the square's angle,
		$\theta_{\square}/2=\pi\beta/2$. Directly: the radius vector from the
		circle's center $-1-i$ to $\sqrt2-1$ is $\sqrt2+i$, of argument
		$\arctan(1/\sqrt2)=35.264^{\circ}$, so the inward tangent has argument
		$125.264^{\circ}$, while the other side points along argument
		$180^{\circ}$; the angle is $54.736^{\circ}=\pi\beta/2$. \checkmark
		\item At the edge midpoint $g=(\sqrt3-\sqrt2)e^{i\pi/4}$: the radius
		vector from $-1-i$ is $(1+i)\sqrt3/\sqrt2$, of argument $\pi/4$, so the
		tangent is perpendicular to the ray $\arg g=\pi/4$: the angle is exactly
		$\pi/2$. \checkmark
	\end{itemize}
	Hence the angles are
	\begin{equation}\label{B:angles}
	\boxed{\ \Bigl(\frac{\pi}{4},\ \frac{\pi}{2},\ \frac{\pi\beta}{2}\Bigr)
		=\bigl(45^{\circ},\ 90^{\circ},\ 54.7356^{\circ}\bigr)\ }
	\end{equation}
	which is \eqref{eq:trifct}. \textbf{They were derived, not assumed.}
	
	\medskip\noindent
	\textbf{Why this is the whole game.} A circular-arc \emph{triangle} is
	uniformized rigidly: the Schwarzian equation
	$\{f,t\}=\sum_{j}\bigl[\tfrac{1-\theta_{j}^{2}}{2(t-t_{j})^{2}}
	+\tfrac{\beta_{j}}{t-t_{j}}\bigr]$
	has, for $n=3$ singular points, all residues $\beta_{j}$ determined by the
	$\theta_{j}$ (after normalizing $t_{j}=0,1,\infty$), so there is no
	accessory parameter. For $n=4$ one free constant survives and its
	determination is a transcendental (isomonodromic) problem. Since
	$\Sigma^{*}$ has four boundary arcs, the naive domain is a
	\emph{quadrilateral}; only the order-$8$ symmetry brings it down to a
	triangle.
	
	\subsection{What a Schwarz triangle function is}
	
	Consider the hypergeometric equation
	\begin{equation}\label{B:hyp}
	t(1-t)y''+\bigl[c-(a+b+1)t\bigr]y'-a\,b\,y=0 
	\end{equation}
	with regular singular points $t=0,1,\infty$ and local exponent pairs
	\[
	\{0,\,1-c\}\ \text{at}\ 0,\qquad
	\{0,\,c-a-b\}\ \text{at}\ 1,\qquad
	\{a,\,b\}\ \text{at}\ \infty .
	\]
	If $y_{1},y_{2}$ are two independent solutions, the ratio
	$s=y_{1}/y_{2}$ is called a \emph{Schwarz map} or \emph{triangle
		function}, and the classical theorem of Schwarz is:
	
	\begin{quote}
		For suitable choice of basis, $s$ maps the upper half-plane $\HH$
		conformally and bijectively onto a circular-arc triangle whose angles at
		the images of $t=0,1,\infty$ are
		\[
		\pi|1-c|,\qquad \pi|c-a-b|,\qquad \pi|a-b| ,
		\]
		that is, $\pi$ times the exponent \emph{differences}. Conversely, any
		circular-arc triangle is the image of such a map, and the parameters are
		recovered from the angles. The triangle is determined up to a M\"obius
		transformation of the target, corresponding to a change of solution basis.
	\end{quote}
	
	The reason is local: near $t=0$ the two solutions behave as $1$ and
	$t^{1-c}$, so their ratio behaves as $t^{1-c}$, which opens an angle
	$\pi(1-c)$; and $s$ extends by Schwarz reflection across each of the three
	boundary intervals of $\partial\HH$, the monodromy of \eqref{B:hyp}
	becoming the group generated by the three reflections.
	
	\subsection{The uniformizing coordinate $t(w)$}
	
	We need a conformal bijection from the fundamental domain
	$S=\{|w|<1,\ 0<\arg w<\pi/4\}$ of \eqref{eq:fundmn} onto $\HH$, sending the three
	corners of $S$ to $t=0,1,\infty$ in the order matching \eqref{B:angles}.
	Two steps:
	
	\medskip\noindent
	\textbf{Step 1: $u=w^{4}$} maps the $45^{\circ}$ sector $S$ onto the upper
	half-disk $\{|u|<1,\ 0<\arg u<\pi\}$, with
	$w=0,1,e^{i\pi/4}\mapsto u=0,1,-1$.
	
	\medskip\noindent
	\textbf{Step 2:} the half-disk to $\HH$ by
	\begin{equation}\label{B:step2}
	t=\frac{4u}{(1+u)^{2}},
	\qquad
	u=0,1,-1\ \longmapsto\ t=0,1,\infty .
	\end{equation}
	That \eqref{B:step2} is the right map is checked on the boundary: for
	$u=e^{i\varphi}$,
	\[
	\frac{4e^{i\varphi}}{(1+e^{i\varphi})^{2}}
	=\frac{4e^{i\varphi}}{e^{i\varphi}\bigl(e^{-i\varphi/2}+e^{i\varphi/2}\bigr)^{2}}
	=\frac{1}{\cos^{2}(\varphi/2)}\in[1,\infty),
	\]
	while $u\in(0,1)\mapsto t\in(0,1)$ and $u\in(-1,0)\mapsto t\in(-\infty,0)$.
	So $\partial(\text{half-disk})\to\RR\cup\{\infty\}$, and one interior
	point (e.g.,\ $u=i/2\mapsto t=\tfrac{16}{25}(2+\tfrac32 i)$) confirms the
	image is the \emph{upper} half-plane.
	
	\medskip\noindent
	Composing:
	\begin{equation}\label{B:t}
	t=\frac{4w^{4}}{(1+w^{4})^{2}},
	\qquad
	t^{1/4}=\frac{\sqrt2\,w}{(1+w^{4})^{1/2}} ,
	\end{equation}
	which is \eqref{eq:tofw}. The three sides of $S$ go to $(0,1)$, $(1,\infty)$,
	$(-\infty,0)$, and the corners $w=0,1,e^{i\pi/4}$ to $t=0,1,\infty$.
	
	\subsection{Reading off $a$, $b$, $c$}
	
	This is the answer to the question ``precisely how were $a,b,c$
	calculated?'' Match \eqref{B:angles} to Schwarz's theorem through the
	correspondence of \S A.6:
	
	\begin{center}
		\begin{tabular}{llll}
			\toprule
			corner of $S$ & $t$ & required angle & exponent difference\\
			\midrule
			$w=0$ (square center) & $0$ & $\pi/4$ & $1-c=\tfrac14$\\
			$w=1$ (square corner) & $1$ & $\pi\beta/2$ & $c-a-b=\tfrac{\beta}{2}$\\
			$w=e^{i\pi/4}$ (edge midpoint) & $\infty$ & $\pi/2$ & $b-a=\tfrac12$\\
			\bottomrule
		\end{tabular}
	\end{center}
	
	Three linear equations in three unknowns:
	\begin{equation}\label{B:solve}
	c=\tfrac34,
	\qquad
	a+b=c-\tfrac{\beta}{2}=\tfrac34-\tfrac{\beta}{2},
	\qquad
	b-a=\tfrac12 ,
	\end{equation}
	whence
	\begin{equation}\label{B:abc}
	\boxed{\
		a=\tfrac12\Bigl(\tfrac34-\tfrac{\beta}{2}-\tfrac12\Bigr)=\tfrac18-\tfrac{\beta}{4},
		\qquad
		b=a+\tfrac12=\tfrac58-\tfrac{\beta}{4},
		\qquad
		c=\tfrac34 .\ }
	\end{equation}
	This is \eqref{eq:abc}. The parameters are \emph{linear} in $\beta$ because the
	angles are, and $\beta$ enters through exactly one of the three angles ---
	the one at the corner of the spherical square, i.e.,\ the one carrying the
	geometry of the tetrahedron.
	
	\medskip\noindent
	\textbf{The solution basis.} At $t=0$ the Frobenius solutions of
	\eqref{B:hyp} with $c=\tfrac34$ are
	\begin{equation}\label{B:frob}
	y_{2}=\Fone(t)={}_2F_1(a,b;\tfrac34;t),
	\qquad
	y_{1}=t^{1-c}\,{}_2F_1(a-c+1,\,b-c+1;\,2-c;\,t)
	=t^{1/4}\,\Ftwo(t),
	\end{equation}
	since $1-c=\tfrac14$, $2-c=\tfrac54$, $a-c+1=a+\tfrac14$,
	$b-c+1=b+\tfrac14$. This is precisely the definition \eqref{eq:OrdHyp} of
	$\Fone,\Ftwo$, and
	\begin{equation}
	g=\kappa\,\frac{y_{1}}{y_{2}}=\kappa\,t^{1/4}\,\frac{\Ftwo(t)}{\Fone(t)}
	\end{equation}
	is the first half of \eqref{eq:data}. The basis is forced (up to scale) by the
	requirement $g(0)=0$ with a $\pi/4$ corner there.
	
	\subsection{Fixing the one constant $\kappa$}
	
	A Schwarz map is determined only up to post-composition with a
	M\"obius transformation. Three of the four real degrees of freedom are
	spent by demanding that the triangle sit at $g=0$ with its sides along
	$\arg g=0$ and $\arg g=\pi/4$; the last is a dilation, and it is fixed by
	placing the corner correctly:
	\begin{equation}
	g(w=1)=\sqrt2-1
	\quad\Longrightarrow\quad
	\kappa\,\frac{\Ftwo(1)}{\Fone(1)}=\sqrt2-1
	\quad\Longrightarrow\quad
	\kappa=(\sqrt2-1)\,\frac{\Fone(1)}{\Ftwo(1)} .
	\end{equation}
	Gauss' theorem
	${}_2F_1(a,b;c;1)=\Gamma(c)\Gamma(c-a-b)/\bigl(\Gamma(c-a)\Gamma(c-b)\bigr)$,
	legitimate because $c-a-b=\beta/2>0$, gives
	\[
	\Fone(1)=\frac{\Gamma(\tfrac34)\,\Gamma(\tfrac{\beta}{2})}
	{\Gamma(\tfrac58+\tfrac{\beta}{4})\,\Gamma(\tfrac18+\tfrac{\beta}{4})},
	\qquad
	\Ftwo(1)=\frac{\Gamma(\tfrac54)\,\Gamma(\tfrac{\beta}{2})}
	{\Gamma(\tfrac78+\tfrac{\beta}{4})\,\Gamma(\tfrac38+\tfrac{\beta}{4})},
	\]
	using $\tfrac34-a=\tfrac58+\tfrac{\beta}{4}$,
	$\tfrac34-b=\tfrac18+\tfrac{\beta}{4}$,
	$1-a=\tfrac78+\tfrac{\beta}{4}$, $1-b=\tfrac38+\tfrac{\beta}{4}$, and
	noting that the second $\Ftwo$ parameter sum gives
	$\tfrac54-(a+\tfrac14)-(b+\tfrac14)=\tfrac34-a-b=\tfrac{\beta}{2}$ as well.
	The factors $\Gamma(\beta/2)$ cancel, leaving \eqref{eq:kappa}.
	
	\subsection{Determining the height differential $dh$}
	
	Only $dh$ remains. Write
	\begin{equation}\label{B:dq}
	dh=y_{1}y_{2}\,dq,
	\qquad
	\text{equivalently}
	\qquad
	\frac{dh}{g}=y_{2}^{2}\,dq,
	\qquad
	g\,dh=y_{1}^{2}\,dq ,
	\end{equation}
	which is legitimate for \emph{any} $1$-form $dq$ and automatically
	satisfies $\bigl(\tfrac{dh}{g}\bigr)(g\,dh)=dh^{2}$, hence
	$\Phi\cdot\Phi=0$ (the normalization implicit in \S4). So the problem reduces to
	finding one scalar $1$-form $dq$, and the ansatz
	\begin{equation}\label{B:dqform}
	dq=\lambda\,t^{\sigma}(1-t)^{\rho}\,dt
	\end{equation}
	is dictated by the requirement that $\Phi$ be single-valued with
	prescribed local exponents at the three singular points --- nothing else
	is available, since $dq$ must be free of zeros and poles in the interior.
	
	Two local conditions determine $\sigma$ and $\rho$.
	
	\medskip\noindent
	\textbf{Condition 1 (regularity at the center $w=0$).} The point $w=0$ is
	an interior point of $\Sigma$, so $\Phi$ must be holomorphic and
	non-vanishing there. From \eqref{B:t}, near $w=0$,
	\[
	t=4w^{4}+O(w^{8}),\qquad dt=16w^{3}\bigl(1+O(w^{4})\bigr)dw ,
	\]
	so $t^{\sigma}\,dt\sim 4^{\sigma}\cdot16\,w^{4\sigma+3}\,dw$, and
	$y_{1}y_{2}=t^{1/4}\Ftwo\Fone\sim\sqrt2\,w$. Demanding that
	$dh/g=y_{2}^{2}dq$ be $O(1)\,dw$ with non-zero limit gives
	\begin{equation}\label{B:sigma}
	4\sigma+3=0
	\quad\Longrightarrow\quad
	\sigma=-\tfrac34 .
	\end{equation}
	Then $dh\sim w\,dw$ and $g\,dh\sim w^{2}dw$: the simple zero of $dh$ at
	$w=0$ is matched by the simple zero of $g$ there, so by \S\ref{subsec:GmWr}(ii) the
	metric \eqref{B:metric} is regular --- no branch point. \checkmark
	
	\medskip\noindent
	\textbf{Condition 2 (Plateau exponent at the vertex $w=1$).} The corner
	$w=1$ maps to a \emph{vertex} of the skew quadrilateral $Q$, where two
	straight edges meet at interior angle $\pi\gamma$ with
	\[
	\gamma=1-\beta,\qquad
	\pi\gamma=\pi-\arccos(-\tfrac13)=\arccos(\tfrac13)=70.5288^{\circ} .
	\]
	The classical local behavior of a Plateau solution at such a corner is
	\[
	\Phi\sim\zeta^{\gamma-1}\,d\zeta=\zeta^{-\beta}\,d\zeta
	\]
	in a local half-plane coordinate $\zeta$ vanishing at the vertex. Now
	near $w=1$,
	\[
	1-t=\frac{(1-w^{4})^{2}}{(1+w^{4})^{2}} ,
	\]
	so combining with \eqref{B:sigma},
	\[
	t^{-3/4}(1-t)^{\rho}\,dt
	=\text{(regular)}\cdot(1-w^{4})^{2\rho+1}\,dw ,
	\]
	the extra $+1$ coming from the factor $(1-w^{4})$ in
	$dt=16w^{3}(1-w^{4})(1+w^{4})^{-3}dw$. Matching to $(1-w^{4})^{-\beta}$:
	\begin{equation}\label{B:rho}
	2\rho+1=-\beta
	\quad\Longrightarrow\quad
	\rho=-\frac{1+\beta}{2} .
	\end{equation}
	
	\medskip\noindent
	Hence $dq=\lambda\,t^{-3/4}(1-t)^{-(1+\beta)/2}dt$, and
	$dh=y_{1}y_{2}dq=\lambda\,t^{-1/2}(1-t)^{-(1+\beta)/2}\Fone\Ftwo\,dt$,
	which is the second half of \eqref{eq:data}. \textbf{Both exponents are forced
		by local geometry: one by smoothness at an interior point, one by the
		Plateau corner angle.}
	
	\medskip\noindent
	\textbf{Condition 3 (the third corner) is automatic.} At
	$w=e^{i\pi/4}$ ($t=\infty$) the surface point is the \emph{midpoint of an
		edge} of $Q$ --- a smooth boundary point --- so $\Phi$ must again be
	regular and non-vanishing. Check: as $t\to\infty$,
	$\Fone(t)\sim\text{const}\cdot t^{-a}$, and $t\sim-4(1+w^{4})^{-2}$, so
	$\Fone^{2}\sim(1+w^{4})^{4a}=(1+w^{4})^{1/2-\beta}$; meanwhile the
	pullback of $dq$ carries $(1+w^{4})^{\beta-1/2}$ (see \eqref{B:pull}
	below). The two cancel exactly, leaving $dh/g=O(1)\,dw$. \checkmark \ No
	new condition arises --- as it must be, the angle $\pi/2$ there having
	already been used in \eqref{B:solve}.
	
	\subsection{Verification: the pullbacks \eqref{eq:w1}--\eqref{eq:w3}}
	
	For completeness, the elementary computation. From \eqref{B:t},
	\[
	dt=\frac{16w^{3}(1-w^{4})}{(1+w^{4})^{3}}\,dw,
	\qquad
	t^{-3/4}=4^{-3/4}w^{-3}(1+w^{4})^{3/2},
	\qquad
	(1-t)^{-\frac{1+\beta}{2}}=\frac{(1+w^{4})^{1+\beta}}{(1-w^{4})^{1+\beta}} .
	\]
	Multiplying, with $16\cdot4^{-3/4}=2^{4-3/2}=4\sqrt2$ and
	$R=(1-w^{4})^{-\beta}(1+w^{4})^{\beta}$ as in \eqref{eq:rwabrv}:
	\begin{equation}\label{B:pull}
	dq=4\sqrt2\,\lambda\,(1-w^{4})^{-\beta}(1+w^{4})^{\beta-\frac12}\,dw
	=4\sqrt2\,\lambda\,\frac{R\,dw}{(1+w^{4})^{1/2}} .
	\end{equation}
	Then, using $y_{2}=\Fone$, $y_{1}=t^{1/4}\Ftwo=\sqrt2\,w(1+w^{4})^{-1/2}\Ftwo$
	and $y_{1}^{2}=2w^{2}(1+w^{4})^{-1}\Ftwo^{2}$:
	\begin{align}
	\frac{dh}{g}&=\kappa^{-1}y_{2}^{2}\,dq
	=\frac{4\sqrt2\,\lambda}{\kappa}\,\Fone^{2}\,\frac{R\,dw}{(1+w^{4})^{1/2}},
	\tag{2.9}\\
	dh&=y_{1}y_{2}\,dq
	=8\lambda\,w\,\Fone\Ftwo\,\frac{R\,dw}{1+w^{4}},
	\tag{2.10}\\
	g\,dh&=\kappa\,y_{1}^{2}\,dq
	=8\sqrt2\,\lambda\kappa\,w^{2}\Ftwo^{2}\,\frac{R\,dw}{(1+w^{4})^{3/2}} .
	\tag{2.11}
	\end{align}
	The product of \eqref{eq:w1} and \eqref{eq:w3} is
	$64\lambda^{2}\Fone^{2}\Ftwo^{2}w^{2}R^{2}(1+w^{4})^{-2}dw^{2}$,
	which is \eqref{eq:w2} squared. \checkmark
	
	\medskip\noindent
	Finally the derivative formula \eqref{eq:gprime}: since $y_{1},y_{2}$ solve the same
	equation \eqref{B:hyp}, their Wronskian is
	$W=y_{2}y_{1}'-y_{1}y_{2}'=(1-c)\,t^{-c}(1-t)^{c-a-b-1}
	=\tfrac14 t^{-3/4}(1-t)^{\beta/2-1}$, so
	$\bigl(y_{1}/y_{2}\bigr)'=W/y_{2}^{2}$ and, pulling back through
	\eqref{B:t},
	\[
	g'(w)=\kappa\,\frac{W}{\Fone^{2}}\,\frac{dt}{dw}
	=\sqrt2\,\kappa\,\frac{(1-w^{4})^{\beta-1}(1+w^{4})^{\frac12-\beta}}{\Fone(t)^{2}} 
	\]
	with $g'(0)=\sqrt2\kappa$, confirming \eqref{eq:gprime}. No differentiation of
	${}_2F_1$ is needed anywhere.
	
	\subsection{What was tried first, and why it failed}
	
	Honesty about the search may save several readers valuable time.
	
	\medskip\noindent
	\textbf{Attempt 1 (wrong): an algebraic $tD/rPD$ ansatz.} Because the
	contour $Q$ resembles the Schwarz $D$ contour, the first guess was
	\[
	g=w,\qquad dh=\frac{dw}{\sqrt{w^{8}+2\nu w^{4}+1}} 
	\]
	with a single real $\nu$ to be fixed by a root-find imposing that the four
	boundary planes have regular-tetrahedral normals. This is
	\emph{impossible}, for the reason that later became Theorem~\ref{thm:nonalg}: with
	$g=w$ rational and $dh$ an algebraic form, the Gauss map is algebraic,
	hence by Schwarz's classification has finite projective monodromy, hence
	the corner exponent $\beta/2$ must be rational. But
	$\beta/2=\arccos(-\tfrac13)/2\pi$ is irrational. The ansatz
	belongs to the $\beta=2/3$ member of the family, not to the
	regular tetrahedron. Symptom in practice: one can force the branch points
	to the right places, but then the corner angle comes out $60^{\circ}$
	instead of $54.7356^{\circ}$.
	
	\medskip\noindent
	\textbf{Attempt 2 (right idea, too hard): the four-cornered domain.}
	Recognising \eqref{B:key}, the natural move is to uniformize the Gauss
	image directly: a circular-arc \emph{quadrilateral} with all angles
	$\pi\beta$. This is correct but introduces an accessory parameter, whose
	determination is an isomonodromic problem (Painlev\'e VI). Tractable in
	principle; unpleasant.
	
	\medskip\noindent
	\textbf{Attempt 3 (the resolution): quotient by the symmetry first.}
	Counting the symmetries of $Q$ --- order $8$, $D_{2d}$ --- and noticing
	that they act on the Gauss square as its full dihedral symmetry group,
	reduces the domain to the triangle \eqref{B:tri}. Three corners: no
	accessory parameter. The entire representation then falls out in the
	half-page \S A.7--\S A.9.
	
	\medskip\noindent
	\textbf{Two bugs worth recording,} both of which produced plausible but
	wrong pictures before being caught:
	\begin{enumerate}
		\item The prefactor $\sqrt2-1$ in $\kappa$ is specific to the regular
		tetrahedron. In general it is
		$\sqrt{\tan(\pi\beta/2-\pi/4)}$. Using the wrong prefactor makes the
		third side of the Gauss triangle a \emph{small} circle rather than a great
		circle, so $\Sigma$ has no straight edge and $\Sigma^{*}$ no planar face,
		while $\Phi\cdot\Phi=0$ continues to hold: one still obtains \emph{a}
		minimal surface, merely not one in a tetrahedron.
		\item The conformal factor is $\lambda_{\mathrm{conf}}=|\Phi|/\sqrt2$, not
		$|\Phi|$, because $\Phi\cdot\Phi=0$ forces
		$|\Real\Phi|=|\Imag\Phi|=|\Phi|/\sqrt2$. A missing $\sqrt2$ here inflates
		all lengths by $41\%$.
	\end{enumerate}
	
	\subsection{Summary: the logical chain}
	
	\begin{enumerate}
		\item Free boundary in a face of normal $n$ $\iff$ $N\cdot n=0$ $\iff$
		$g$ lies on a circle \eqref{B:circle}. [\S A.2]
		\item Four faces $\Rightarrow$ Gauss image is a circular-arc
		quadrilateral; with tetrahedral normals it is one spherical square of the
		spherical cuboctahedron, corners $\pm(\sqrt2-1),\pm i(\sqrt2-1)$, angles
		$\pi\beta$. [\S A.3]
		\item $\int K=-2\pi(2\beta-1)$ from the spherical excess; numerics confirm
		$\deg g=1$. [\S A.3]
		\item $D_{2d}$ (order $8$) reduces the square to a triangle with angles
		$(\pi/4,\pi/2,\pi\beta/2)$. Three corners $\Rightarrow$ no accessory
		parameter. [\S A.4]
		\item $g$ is the Schwarz triangle function for those angles; exponent
		differences give $c=\tfrac34$, $b-a=\tfrac12$, $c-a-b=\tfrac\beta2$,
		hence $a=\tfrac18-\tfrac\beta4$, $b=a+\tfrac12$. [\S A.5--A.7]
		\item $\kappa$ fixed by the corner value $\sqrt2-1$; Gauss's theorem gives
		it in $\Gamma$-values. [\S A.8]
		\item $dh=y_{1}y_{2}\,dq$ with $dq=\lambda t^{-3/4}(1-t)^{-(1+\beta)/2}dt$;
		the exponent $-\tfrac34$ from regularity at $w=0$, the exponent
		$-\tfrac{1+\beta}{2}$ from the Plateau corner angle $\pi(1-\beta)$.
		[\S A.9]
		\item $\lambda\in\RR$ gives $\Sigma$ (Plateau, straight edges);
		$\lambda\in i\RR$ gives $\Sigma^{*}$ (free boundary). No parameter
		remains. [\S2.4]
	\end{enumerate}

\section{The verification notebook}

Everything asserted numerically in \S2, 3, 7, 8 is reproduced by
the following \textsc{Mathematica} code. It is self-contained, runs in a
single kernel in well under a minute, and requires no differential
equation solver: the two integrals of \S7 are ordinary real
integrals over $[0,1]$, and the Gauss-map checks of \S2 are
pointwise evaluations. Parts B.1--B.3 verify the representation itself,
Parts B.4--B.6 produce the constants of \S8, and Part B.7 is the one
two-dimensional quadrature, used only to confirm $\deg g=1$.

Two remarks on precision. First, \texttt{prec} is the working precision of
the integrands and \texttt{qopt} that of the quadrature; the values quoted
in \S8 were obtained with $\texttt{prec}=80$ and
$\texttt{PrecisionGoal}\to50$, and reproduce those obtained with
$\texttt{prec}=60$, $\texttt{PrecisionGoal}\to32$ in all but the last
printed digit. Second, every singular endpoint is either removed in closed
form or is an analytic point of an alternative hypergeometric
representation; nothing is approached by truncation, which is what makes
$28$ digits cheap (cf.\ Remark~\ref{rem:vertex}).

\subsection{Constants}

\begin{verbatim}
ClearAll["Global`*"];
prec = 80;   $MaxExtraPrecision = 3000;

beta  = N[ArcCos[-1/3]/Pi, prec];
a1 = 1/8 - beta/4;   b1 = a1 + 1/2;   c1 = 3/4;   pE = 1/(1 - beta);

F1at1 = Gamma[c1] Gamma[beta/2]/(Gamma[c1 - a1] Gamma[c1 - b1]);
F2at1 = Gamma[5/4] Gamma[beta/2]/(Gamma[1 - a1] Gamma[1 - b1]);
kappa = (Sqrt[2] - 1) F1at1/F2at1;

qopt = {WorkingPrecision -> 65, PrecisionGoal -> 50, AccuracyGoal -> 45,
MaxRecursion -> 80,
Method -> {"GlobalAdaptive", "SymbolicProcessing" -> 0}};
\end{verbatim}

\noindent
\texttt{F1at1} and \texttt{F2at1} are Gauss's closed forms for
$F_{1}(1)$, $F_{2}(1)$, legitimate because $c-a-b=\beta/2>0$; the two
factors $\Gamma(\beta/2)$ cancel in \texttt{kappa} (\S2.5). The
\texttt{AccuracyGoal} is not cosmetic: one of the six ray integrals below
vanishes identically, and without an absolute tolerance
\texttt{NIntegrate} reports \texttt{izero} and \texttt{slwcon} while still
returning the correct $0$.

\subsection{The Gauss map (\S2.2, 2.3)}\label{subsec:Gauss}

\begin{verbatim}
F1[t_] := Hypergeometric2F1[a1, b1, c1, t];
F2[t_] := Hypergeometric2F1[a1 + 1/4, b1 + 1/4, 5/4, t];
tOf[w_] := 4 w^4/(1 + w^4)^2;
gOf[w_] := kappa Sqrt[2] w/Sqrt[1 + w^4] F2[tOf[w]]/F1[tOf[w]];

eps = SetPrecision[10^-30, prec];
Print["g, square corner  = ", N[gOf[1 - eps], 18],
"   target ", N[Sqrt[2] - 1, 18]];
Print["g, edge midpoint  = ", N[gOf[(1 - eps) Exp[I Pi/4]], 30],
"   target ", N[(Sqrt[3] - Sqrt[2]) Exp[I Pi/4], 30]];
Print["max | |g+1+I| - Sqrt[3] | on arc = ",
N[Max@Table[Abs[Abs[gOf[(1 - eps) Exp[I phi]] + 1 + I] - Sqrt[3]],
{phi, 1/50, 39/50, 1/25}], 6]];
Print["max |Im g| on (0,1)             = ",
N[Max@Table[Abs[Im[gOf[r]]], {r, 1/10, 9/10, 1/10}], 6]];
Print["max |Arg g - Pi/4| on diagonal  = ",
N[Max@Table[Abs[Arg[gOf[r Exp[I Pi/4]]] - Pi/4], {r, 1/5, 4/5, 1/5}], 6]];
\end{verbatim}

\noindent
The five lines verify, in order: the corner of the spherical square; the
edge midpoint; that the arc $|w|=1$ lands on the great circle with pole
$(1,1,1)/\sqrt3$; and that the two rays land on the straight sides
$\arg g=0$ and $\arg g=\pi/4$ of the circular-arc triangle.

Only the second and third are substantive. The first is a tautology, since
$\kappa$ was \emph{defined} to place the corner at $\sqrt2-1$; it is
retained because it exercises \texttt{F1at1}, \texttt{F2at1} and the
$t\to1^{-}$ limit independently, and agrees to about $18$ digits, the
error being $O\bigl((1-t)^{\beta/2}\bigr)=O(\varepsilon^{\beta})$. The
edge-midpoint value is not a tautology: it confirms the exponent
difference $b-a=\tfrac12$, i.e.,\ the right angle at $t=\infty$, and agrees
to nearly $60$ digits. The arc residual is $O(\varepsilon)$ purely because
the sample points sit at radius $1-\varepsilon$ rather than on the unit
circle; it is the direct verification that $\Sigma^{*}$ meets its face
orthogonally, and at $\varepsilon=10^{-30}$ it improves by eighteen orders
of magnitude on the value quoted from the earlier
differential-equation computation. The last two vanish exactly, to full
working precision, by the reality of $F_{1},F_{2},R$ on the two rays.

\subsection{The two ray integrands (\S7)}

\begin{verbatim}
(* (A) real ray, desingularized by 1 - rho = tau^pE.
Returns {Phi_1, Psi, Phi_3} as real numbers.                *)
AR[tau_?NumericQ] := Block[{tt, rho, s4, p4, t, f1, f2, Rr, v1, v2, v3},
tt = SetPrecision[tau, prec];   rho = 1 - tt^pE;
p4 = 1 + rho^4;   s4 = (1 + rho) (1 + rho^2);   (* 1-rho^4 = tau^pE s4 *)
t  = 4 rho^4/p4^2;
f1 = Hypergeometric2F1[a1, b1, c1, t];
f2 = Hypergeometric2F1[a1 + 1/4, b1 + 1/4, 5/4, t];
Rr = pE s4^(-beta) p4^beta;        (* = pE tau^(pE-1) R, powers cancelled *)
v1 = 4 Sqrt[2] f1^2 Rr/(kappa Sqrt[p4]);
v2 = 8 rho f1 f2 Rr/p4;
v3 = 8 Sqrt[2] kappa rho^2 f2^2 Rr/p4^(3/2);
{(v1 - v3)/2, (v1 + v3)/2, v2}];

(* (B) pi/4 ray, t real negative.  K1,K2 analytic at rho = 1.
Returns the three components of Re[Exp[I Pi/4] Phi].        *)
G1a = Gamma[c1] Gamma[1/2]/(Gamma[b1] Gamma[c1 - a1]);
G1b = Gamma[c1] Gamma[-1/2]/(Gamma[a1] Gamma[c1 - b1]);
G2a = Gamma[5/4] Gamma[1/2]/(Gamma[b1 + 1/4] Gamma[1 - a1]);
G2b = Gamma[5/4] Gamma[-1/2]/(Gamma[a1 + 1/4] Gamma[1 - b1]);

DGb[rho_?NumericQ, useZ_] :=
Block[{rr, m, p4, q, t, z, K1, K2, pf, u1, u2, u3},
rr = SetPrecision[rho, prec];   m = 1 - rr^4;   p4 = 1 + rr^4;   q = 2 rr^2;
If[! useZ,
t = -4 rr^4/m^2;
K1 = m^(-2 a1) Hypergeometric2F1[a1, b1, c1, t];
K2 = m^(-2 a1 - 1/2) Hypergeometric2F1[a1 + 1/4, b1 + 1/4, 5/4, t],
z = -m^2/(4 rr^4);                             (* = 1/t, -> 0 at rho = 1 *)
K1 = G1a q^(-2 a1)       Hypergeometric2F1[a1, a1 + 1/4, 1/2, z]
+ G1b m q^(-2 b1)     Hypergeometric2F1[b1, b1 + 1/4, 3/2, z];
K2 = G2a q^(-2 a1 - 1/2) Hypergeometric2F1[a1 + 1/4, a1, 1/2, z]
+ G2b m q^(-2 b1 - 1/2) Hypergeometric2F1[b1 + 1/4, b1, 3/2, z]];
pf = p4^(-beta);
u1 = 4 Sqrt[2] K1^2 pf/kappa;
u2 = 8 rr Exp[I Pi/4] K1 K2 pf;
u3 = 8 Sqrt[2] kappa I rr^2 K2^2 pf;
Re[Exp[I Pi/4] {(u1 - u3)/2, I (u1 + u3)/2, u2}]];

DG[rho_?NumericQ] := DGb[rho, rho > 4/5];
\end{verbatim}

\noindent
Three devices deserve comment.

\emph{(a) The substitution $1-\rho=\tau^{p}$, $p=1/(1-\beta)$.} On the
real ray the only singularity is the vertex $\rho=1$ of $Q$, where
$\Psi\sim(1-\rho)^{-\beta}$. The Jacobian contributes $p\tau^{p-1}$ and
the data contribute $(1-\rho^{4})^{-\beta}=(\tau^{p}s_{4})^{-\beta}$; since
$p-1=p\beta$, the powers of $\tau$ cancel \emph{identically}, and
\texttt{Rr} is written with the cancellation already performed. No
$0\times\infty$ is ever evaluated, and the transformed integrand is finite
at $\tau=0$ with the value $L_{0}$ of Remark~\ref{rem:vertex}.

\emph{(b) Two hypergeometric representations on the $\pi/4$ ray.} There
$w^{4}=-\rho^{4}$, so $t=-4\rho^{4}/(1-\rho^{4})^{2}$ runs from $0$ to
$-\infty$. Setting
$K_{1}=(1-\rho^{4})^{-2a}F_{1}$, $K_{2}=(1-\rho^{4})^{-2a-1/2}F_{2}$ and
using $4a=\tfrac12-\beta$, every power of $(1-\rho^{4})$ cancels, and
$K_{1},K_{2}$ become analytic at $\rho=1$ --- expansions in integer powers
of $1-\rho^{4}$, because $b-a=\tfrac12$ excludes the logarithmic case. For
$\rho\le\tfrac45$ the series in $t$ is used directly; for $\rho>\tfrac45$
the $t\mapsto1/t$ connection formulae are used, with $z=1/t\to0$. The
constants \texttt{G1a},\dots,\texttt{G2b} are the standard connection
coefficients; $\Gamma(\tfrac12)=\sqrt\pi$ and $\Gamma(-\tfrac12)=-2\sqrt\pi$ 
are signatures of the geometry.

\emph{(c) The splice.} \texttt{DGb} carries the branch as an explicit
argument so that both representations can be evaluated at the \emph{same}
point, which is the only meaningful continuity test.

\subsection{Smoke tests}

\begin{verbatim}
Print["AR[1/2] = ", N[AR[1/2], 15]];
Print["AR[0]   = ", N[AR[0],   15]];       (* finite: the L0 limit *)
Print["DG[1/2] = ", N[DG[1/2], 15]];
Print["DG[1]   = ", N[DG[1],   15]];       (* finite: analytic endpoint *)
Print["splice residual at 4/5: ",
N[DGb[4/5, False] - DGb[4/5, True], 30]];
Print["L0 = ", N[AR[0][[2]], 20], "  vs  ",
N[4 Sqrt[2] pE 2^(-beta) F1at1 F2at1, 20]];
\end{verbatim}

\noindent
All four evaluations must return finite reals; if they do, the integrals
below cannot fail, both integrands being bounded on a closed interval.
\texttt{DG[1/2]} and \texttt{DG[1]} exhibit the pointwise identities
$J_{1}=-J_{2}$ and $J_{3}=0$ of Lemma~\ref{lem:cert}. The splice residual tests the
two hypergeometric representations of Part B.3(b) against each other at a
common point and should vanish to working precision; this is the only
ingredient of the pipeline with no other independent check.

An earlier form of this test, differencing
\texttt{DG[4/5-10\^{}-12]} and \texttt{DG[4/5+10\^{}-12]}, compares two
distinct points and therefore returns
$2\times10^{-12}\,DG'(4/5)\approx10^{-11}$; that number is a derivative
estimate, not a discontinuity.

\subsection{The two integrals}

\begin{verbatim}
ARc[k_Integer, tau_?NumericQ] := AR[tau][[k]];
DGc[k_Integer, rho_?NumericQ] := DG[rho][[k]];

Ivec = Table[NIntegrate[ARc[k, tau], {tau, 0, 1}, Evaluate@qopt], {k, 3}];
Jvec = Table[NIntegrate[DGc[k, rho], {rho, 0, 1}, Evaluate@qopt], {k, 3}];
\end{verbatim}

\noindent
This cell is the numerical heart of the paper: \texttt{Jvec} is
$X_{\Sigma}(e^{i\pi/4})=(J_{1},J_{2},J_{3})$, while \texttt{Ivec} is
$\bigl(I_{1},\,Y,\,I_{3}\bigr)$, so that
$X_{\Sigma}(1)=(I_{1},0,I_{3})$ and $Y$ is the quantity of Proposition~\ref{prop:twoint}. 
\textbf{Note the index trap:} \texttt{Ivec[[2]]} is $Y$, \emph{not}
the $I_{2}$ of Lemma~\ref{lem:cert}; the middle component of \texttt{AR} is the real
density $\Psi$, whereas the second component of $\Phi$ on the real ray is
$i\Psi$, purely imaginary, so the second coordinate of $X_{\Sigma}(1)$ is
zero without being computed. On the $\pi/4$ ray no such splitting occurs
and \texttt{Jvec} is literally $(J_{1},J_{2},J_{3})$.

The scalar wrappers \texttt{ARc}, \texttt{DGc} are not decorative. Passing
a vector-valued integrand guarded by \texttt{?NumericQ} directly to
\texttt{NIntegrate} fails: the symbol \texttt{tau} is still symbolic when
\texttt{Part} is applied, so \texttt{AR[tau][[k]]} either extracts a part
of the unevaluated expression tree (returning \texttt{tau} for $k=1$) or
raises \texttt{Part::partw} for $k=2,3$, after which
\texttt{NIntegrate} returns unevaluated and every downstream
\texttt{Print} displays the expression rather than a number. Wrapping the
part extraction inside a head whose argument is itself gated is the
portable idiom.

\subsection{Certificates and the constants of \S8}

\begin{verbatim}
Yv = Ivec[[2]];   Jv = Abs[Jvec[[2]]];

Print["straightness certificate = ",
N[{Jvec[[1]] - Ivec[[1]], Jvec[[2]], Jvec[[3]] - Ivec[[3]]}, 30]];
Print["I1 - 2|J| = ", N[Ivec[[1]] - 2 Jv, 30]];
Print["I3 -  |J| = ", N[Ivec[[3]] - Jv,   30]];
Print["J1 + J2   = ", N[Jvec[[1]] + Jvec[[2]], 30]];
Print["J3        = ", N[Jvec[[3]], 30]];

Print["Y          = ", N[Yv, 30]];
Print["|J|        = ", N[Jv, 30]];
Print["r          = ", N[Yv/Sqrt[3], 30]];
Print["ell        = ", N[2 Sqrt[2] Yv, 30]];
Print["L          = ", N[8 Sqrt[3] Jv, 30]];
Print["c_i = L/4  = ", N[2 Sqrt[3] Jv, 30]];
Print["Area       = ", N[4 Yv Jv, 30]];
Print["Area/ell^2 = ", N[Jv/(2 Yv), 30]];
Print["saving     = ", N[1 - 2 Jv/Yv, 30]];
Print["kappa      = ", N[kappa, 30]];
Print["Int K      = ", N[2 Pi - 4 ArcCos[-1/3], 30]];
Print["Brakke     = 0.217234155407559"];
\end{verbatim}

\noindent
The three entries of the straightness certificate must agree; they do so
in all $30$ printed digits at $\texttt{prec}=80$, and split in the
$29$th at $\texttt{prec}=60$. This is the strongest single check in the
paper, for the reason given in Lemma~\ref{lem:cert}, and it is obtained free from
integrals computed anyway. The four lines after it exhibit the
certificate in its equivalent form
$I_{1}=2|J|$, $I_{3}=|J|$, $J_{1}=-J_{2}$, $J_{3}=0$; the first two link
the two rays and are the substantive ones, the last two holding pointwise.

The absolute value in \texttt{Jv} implements the convention $\lambda=-i$
of \S2.4; without it, $L$, \texttt{Area} and
$\Area/\ell^{2}$ print with a negative sign, the orientation of
$\Sigma^{*}$ being reversed.

\subsection{Total curvature, and $\deg g=1$}

\begin{verbatim}
gpr[w_] := Block[{ww = SetPrecision[w, prec], t},
t = 4 ww^4/(1 + ww^4)^2;
Sqrt[2] kappa (1 - ww^4)^(beta - 1) (1 + ww^4)^(1/2 - beta)/
Hypergeometric2F1[a1, b1, c1, t]^2];

tk = -8 NIntegrate[
4 Abs[gpr[rho Exp[I phi]]]^2 rho/(1 + Abs[gOf[rho Exp[I phi]]]^2)^2,
{rho, 0, 1}, {phi, 0, Pi/4},
WorkingPrecision -> 30, PrecisionGoal -> 8, MaxRecursion -> 30,
Method -> {"GlobalAdaptive", "SymbolicProcessing" -> 0}];
Print["Int K by quadrature = ", N[tk, 10],
"     exact ", N[2 Pi - 4 ArcCos[-1/3], 10]];
\end{verbatim}

\noindent
This is the only two-dimensional integral in the verification, and the
only one limited to about eight digits; it uses the Wronskian closed form
for $g'$ of \S2.6, so no hypergeometric function is
differentiated. Its purpose is not to determine $\int K$, which
Proposition~\ref{prop:extocrv} gives exactly, but to confirm that the Gauss map covers
the spherical cuboctahedral square \emph{once}: a value near
$-1.3593$ rather than $-2.7187$ establishes $\deg g=1$. The integrand is
$O\bigl((1-\rho^{4})^{2\beta-2}\bigr)$ near the vertex, with exponent
$-0.784$, hence integrable; this check is insensitive to the vertex
truncation that limits boundary-based schemes, which is why it converged
readily even in the earliest versions of the computation.

\subsection{Coverage}

\begin{center}
	\begin{tabular}{llp{4.2cm}}
		\toprule
		Claim & Part & result\\
		\midrule
		$\kappa$ in $\Gamma$-values (\S2.5) & B.1, B.2 & tautology at the corner;
		independent to $18$ digits\\
		Gauss image is the cuboctahedral square (\S2.2, 3) & B.2 & midpoint to
		$\sim\!60$ digits\\
		orthogonality $N\cdot n=0$ (\S7) & B.2 & residual $O(10^{-30})$\\
		two straight sides of the triangle (\S2.2) & B.2 & exact to working
		precision\\
		$L_{0}$ (Remark 7.4) & B.4 & $20$ digits\\
		connection formulae (Part B.3b) & B.4 & splice residual at working
		precision\\
		two-integral reduction (Prop.\ 7.2) & B.5 & ---\\
		straightness, hence planarity of $\partial\Sigma^{*}$ (Lemma 7.3) & B.6 &
		$29$--$30$ digits\\
		$r,\ \ell,\ L,\ c_{i},\ \Area,\ \Area/\ell^{2}$ (\S8) & B.6 & $28$ digits\\
		$\deg g=1$, $\int K$ (Prop.\ 3.1) & B.7 & $8$ digits\\
		\bottomrule
	\end{tabular}
\end{center}

\noindent
\\
Two further points. An earlier computation by differential-equation
integration along the free boundary --- which gave $r$ to $12$ digits, the
four face normals with pairwise inner products $-\tfrac13$, and
$\Area(\Sigma)=\Area(\Sigma^{*})$ by isometry to $8\times10^{-8}$ ---
is superseded in accuracy by Parts B.2 and B.6, but not in logical
content: it tests the surface itself rather than the two rays, and is
retained in Part B.9 as an independent cross-check. Second, nothing in
this appendix addresses global minimality among volume-halving
competitors.

\subsection{Plot}
\begin{verbatim}
(* ================================================================= *)
(*  B.9  Figure 2: surface + tetrahedron + mid-edge square + cross  *)
(*  RUN AFTER B.1-B.6.   DO NOT ClearAll: this cell uses beta, a1,   *)
(*  b1, c1, kappa, pE and Yv from those cells.                       *)
(* ================================================================= *)
Off[NDSolve::precw];          (* precision bookkeeping, not an error *)
precP = 30;                   (* the picture needs ~6 digits *)

(* ---- integrand for the plot; left ungated, as NDSolve prefers ---- *)
PhiP[w_] := Block[{ww = SetPrecision[w, precP], w4, t, f1, f2, R, u1, u2, u3},
w4 = ww^4;  t = 4 w4/(1 + w4)^2;
f1 = Hypergeometric2F1[a1, b1, c1, t];
f2 = Hypergeometric2F1[a1 + 1/4, b1 + 1/4, 5/4, t];
R  = (1 - w4)^(-beta) (1 + w4)^beta;
u1 = 4 Sqrt[2] f1^2 R/(kappa Sqrt[1 + w4]);
u2 = 8 ww f1 f2 R/(1 + w4);
u3 = 8 Sqrt[2] kappa ww^2 f2^2 R/(1 + w4)^(3/2);
{(u1 - u3)/2, I (u1 + u3)/2, u2}];

(* ---- one ODE along the ray arg w = Pi/8 ---- *)
psi0 = SetPrecision[Pi/8, precP];
Rb   = SetPrecision[1 - 10^-12, precP];
solR = First@NDSolve[
{V'[r] == Exp[I psi0] PhiP[r Exp[I psi0]], V[0] == {0, 0, 0}},
V, {r, 0, Rb}, WorkingPrecision -> 24, PrecisionGoal -> 15, MaxSteps -> 10^6];

(* ---- one ODE per radius along the arc, in u with s = u^pE ---- *)
u0P   = psi0^(1 - beta);
uminP = SetPrecision[10^-8, precP];
umaxP = SetPrecision[Pi/4, precP]^(1 - beta);
arcU[rho_] := First@NDSolve[
{W'[u] == pE u^(pE - 1) I rho Exp[I u^pE] PhiP[rho Exp[I u^pE]],
W[u0P] == (V[rho] /. solR)},
W, {u, uminP, umaxP}, WorkingPrecision -> 24, PrecisionGoal -> 12,
MaxSteps -> 10^6];

(* ---- the grid: radii clustered toward the free boundary ---- *)
nr = 40;  ns = 32;
uvals = Table[u, {u, uminP, umaxP, (umaxP - uminP)/ns}];
rvals = Table[Rb (1 - (1 - k/nr)^2), {k, 1, nr}];
LaunchKernels[];  DistributeDefinitions["Global`"];
rows = ParallelTable[
Block[{sol = arcU[SetPrecision[rho, precP]]},
Table[N[-Im[W[u] /. sol]], {u, uvals}]], {rho, rvals}];
grid = Prepend[rows, ConstantArray[{0., 0., 0.}, Length[uvals]]];   (* w = 0 *)
Print["Dimensions[grid] = ", Dimensions[grid]];
Print["tip from grid    = ", N[grid[[-1, 1]], 12],
"     exact {0,-Y,0} = ", N[{0, -Yv, 0}, 12]];

(* ---- explicit quadrilateral mesh: no point-cloud reconstruction ---- *)
quadIdx[m_, n_] := Flatten[Table[
{(i - 1) n + j, (i - 1) n + j + 1, i n + j + 1, i n + j},
{i, m - 1}, {j, n - 1}], 1];
patch[pts_] := GraphicsComplex[Flatten[pts, 1],
Polygon[quadIdx[Length[pts], Length[First[pts]]]]];

(* ---- the D_2d orbit, the four faces, the vertices of T' ---- *)
nn0   = N[{1, 1, 1}/Sqrt[3], 30];
grp   = FixedPoint[Union[Flatten[Outer[Dot, #, #, 1], 1]] &,
{IdentityMatrix[3], DiagonalMatrix[{-1, 1, -1}],
{{0, 1, 0}, {1, 0, 0}, {0, 0, 1}}}];
nrms  = DeleteDuplicates[N[#, 25] & /@ (grp . nn0), Norm[#1 - #2] < 10^-15 &];
verts = N[Sqrt[3] Yv #] & /@ nrms;                    (* = 3 r n_i *)
sq    = N[{{Yv, 0, 0}, {0, Yv, 0}, {-Yv, 0, 0}, {0, -Yv, 0}}];
cross = N[{{{-Yv, 0, 0}, {Yv, 0, 0}}, {{0, -Yv, 0}, {0, Yv, 0}}}];

(* ---- Figure 2 ---- *)
fig = Graphics3D[{
(* tetrahedron T' *)
{Opacity[.10], EdgeForm[Directive[Thin, Gray]], FaceForm[LightBlue],
Polygon[Subsets[verts, {3}]]},
(* the surface: 8 copies of the fundamental patch *)
{FaceForm[Orange, Orange], Specularity[White, 40], EdgeForm[],
Table[patch[Map[m . # &, grid, {2}]], {m, grp}]},
(* flat mid-edge square, and the cross it shares with Sigma* *)
{Opacity[.22], FaceForm[LightGreen], Polygon[sq]},
{Directive[Red, Thickness[.005]], Line /@ cross},
{Black, PointSize[.011], Point[sq]}},
Boxed -> False, Lighting -> "Neutral", ImageSize -> 800,
ViewPoint -> {1.6, -2.2, 1.1}]
\end{verbatim}

\clearpage

\begin{figure}[H]
	\vspace*{3cm}   
	\centering
	\includegraphics[width=\textwidth,height=\textheight,keepaspectratio]{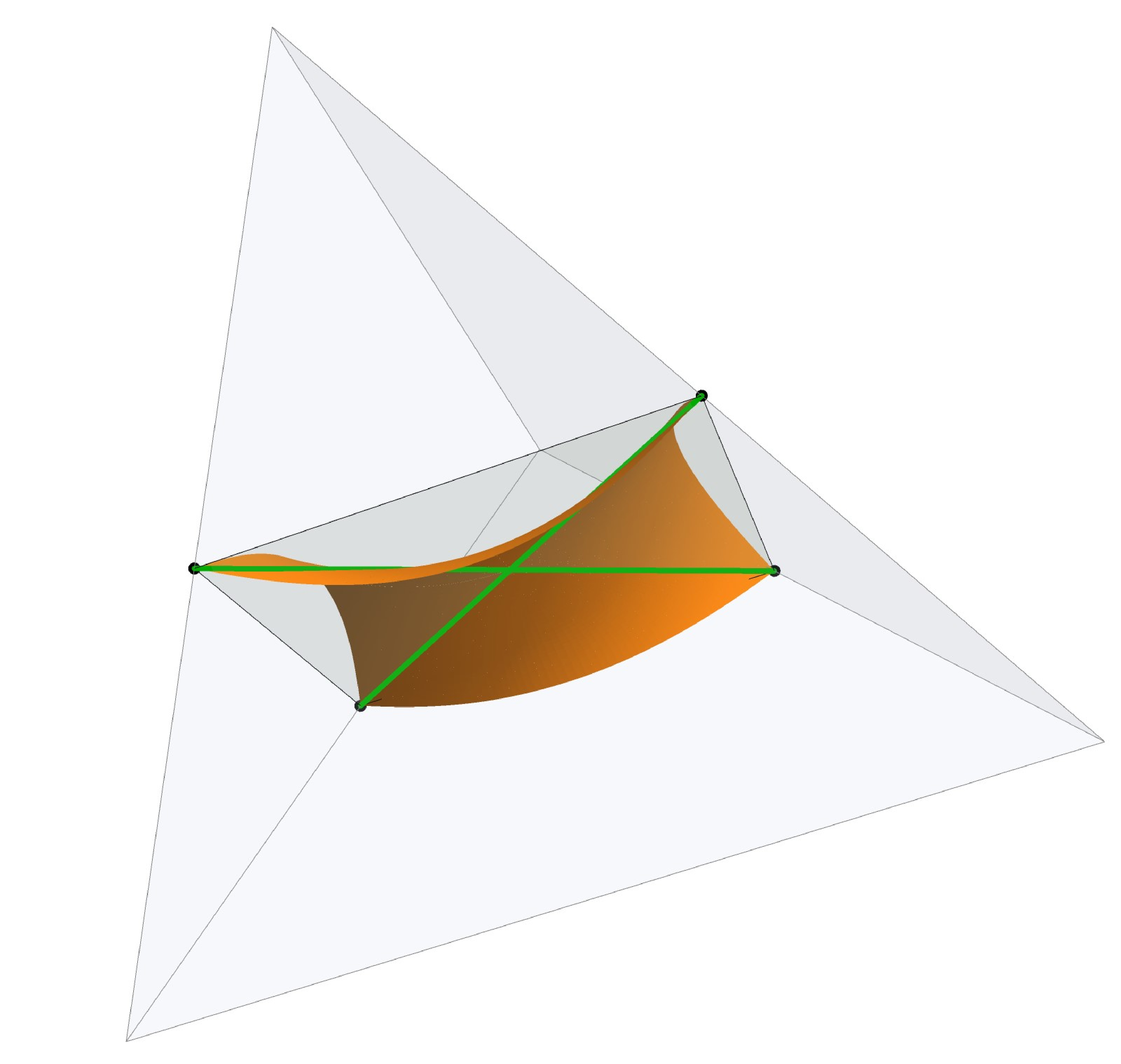}
	\caption[Minimal disk in a tetrahedron]{Another view of Smyth's free--boundary minimal disk 
		$\Sigma^{*}$ in $\tilde{T}$. \\ $\Sigma^{*}$ contains the two diagonals of the flat 
		mid-edge square, so the two surfaces share not only their corners but this 
		entire cross, and both bisect the volume; the four boundary arcs undulate
		alternately to either side of the square's plane.}
\end{figure}

\clearpage

\subsection*{Acknowledgements}
I thank Ken Brakke for helpful correspondence and Surface Evolver calculations. 
Anthropic Claude Opus 5.0 played a crucial role in turning 
my conjectures into theorems: to say that the AI model merely ``assisted'' 
would be an understatement.  It was used in an absolutely essential research 
capacity throughout every aspect of the paper.  No AI funding whatsoever for 
this study was received.  My interest in minimal surfaces began years ago 
(see \cite{Finch}).


\begin{thebibliography}{12}

\bibitem{Smyth1984}
B.~Smyth,
\emph{Stationary minimal surfaces with boundary on a simplex},
Invent.\ Math.\ \textbf{76} (1984), 411--420.

\bibitem{Choe2025}
J.~Choe,
\emph{The periodic Plateau problem and its application},
Ann.\ Global Anal.\ Geom.\ (2025), Art.~16; arXiv:2104.09087.

\bibitem{Schwarz}
H.~A.~Schwarz,
\emph{Gesammelte Mathematische Abhandlungen}, Springer, Berlin, 1890.

\bibitem{Nehari}
Z.~Nehari,
\emph{Conformal Mapping}, Dover, 1975 (Schwarz triangle functions,
Ch.~V).

\bibitem{Karcher}
H.~Karcher,
\emph{Embedded minimal surfaces derived from Scherk's examples},
Manuscripta Math.\ \textbf{62} (1988), 83--114.

\bibitem{FogdenHyde}
A.~Fogden and S.~T.~Hyde,
\emph{Parametrization of triply periodic minimal surfaces}, Acta Cryst.\
A\textbf{48} (1992); and Eur.\ Phys.\ J.\ B \textbf{7} (1999), 91--104.

\bibitem{ChenWeber}
H.~Chen and M.~Weber,
\emph{A new deformation family of Schwarz' $\DD$ surface},
Trans.\ Amer.\ Math.\ Soc.\ \textbf{374} (2021), 2785--2803;
arXiv:1804.01442.

\bibitem{GKD} P.\,J.\,F. Gandy, D. Cvijovi\'c, A.\,L. Mackay and
J. Klinowski, \emph{Exact computation of the triply periodic
	\textup{D} (``diamond'') minimal surface},
Chem.\ Phys.\ Lett.\ \textbf{314} (1999) 543--551.

\bibitem{AnselmoNelsonCrowdy}
T.~Anselmo, R.~Nelson, B.~Carneiro da Cunha and D.~G.~Crowdy,
\emph{Accessory parameters in conformal mapping: exploiting the
isomonodromic tau function for Painlev\'e VI},
Proc.\ R.\ Soc.\ A \textbf{474} (2018), 20180080.

\bibitem{Finch} S. R. Finch, \emph{Mathematical Constants II}, Cambridge Univ.\ 
Press, 2019; \S5.17, Gergonne-Schwarz surface; \S5.18, Partitioning problem.

\end{thebibliography}
\end{document}